\newtheorem{theo}{Theorem}[section]
\newtheorem{defi}[theo]{Definition}
\newtheorem{coro}[theo]{Corollary}
\newtheorem{lemma}[theo]{Lemma}
\newtheorem{conj}[theo]{Conjecture}
\DeclareMathOperator{\rank}{rank}
\DeclareMathOperator{\Ker}{Ker}
\DeclareMathOperator{\im}{Im}
\newcommand{\Z}{\mathbb{Z}}
\newcommand{\F}{\mathbb{F}}
\newcommand{\e}{\varepsilon}
\newcommand{\E}{{\EuScript E}}
\newcommand{\V}{{\EuScript V}}
\newcommand{\G}{{\EuScript G}}
\newcommand{\Esp}{{\mathbb E}}
\newcommand{\Prob}{{\mathbb P}}
\newcommand\mc[1]{\mathcal{#1}}
\newcommand\gen[1]{\langle {#1}\rangle}
\newenvironment{proofof}[2]{ \noindent{\it Proof of #1}~#2. }{\hfill\qed\medskip}
\title{A homological upper bound on critical probabilities for hyperbolic percolation}
\author{Nicolas Delfosse\thanks{Département de Physique, Université de Sherbrooke, Sherbrooke, Québec, J1K 2R1 Canada, nicolas.delfosse@usherbrooke.ca} \ and Gilles Z\'emor\thanks{Institut
    de Math\'ematiques de Bordeaux, UMR 5251, universit\'e de Bordeaux, zemor@math.u-bordeaux.fr}}
\begin{document}

\maketitle


\begin{abstract}
We study bond percolation for a family of infinite hyperbolic graphs. 
We relate percolation to the appearance of homology in finite versions
of these graphs. As a consequence, we derive an upper bound on the
critical probabilities of the infinite graphs.
\end{abstract}
%
%

\section{Introduction}

Let $\G=(\V,\E)$ be an infinite connected graph. Every edge is declared to be {\em
  open} with probability $p$, otherwise it is {\em closed}. This
endows subsets of edges with a product probability measure by
declaring edges to be open or closed independently of the others, and
creates a random open subgraph $\varepsilon$. If a given edge $e$
belongs to an infinite connected component of $\varepsilon$, we say
that (bond) percolation occurs. If the graph $\G$ is
edge-transitive, then the probability of percolation does not depend
on $e$ and we may denote this probability by $f(p)$. Arguably, the
most studied parameter of percolation theory is the {\em critical
  probability} $p_c=p_c(\G)$ which is the supremum of the set of $p$'s
for which $f(p)=0$.

Ever since the seminal work of Kesten \cite{K80}
percolation was extensively studied on the lattices associated to
$\Z^d$, for background see \cite{G89}: in the present paper, we are interested in percolation on
regular tilings of the hyperbolic plane. This topic was first
introduced by Benjamini and Schramm \cite{Be96}, and further studied
in \cite{Be01,GZ11,BMK09} among other papers. Specifically, our focus
is on the family of graphs that we shall denote by $G(m)$, for $m\geq
4$, that are regular of degree $m$, planar, and tile the plane by
elementary faces of length $m$. For $m=4$, the graph $G(m)$ is exactly
the square $\Z^2$ lattice. The local structure of the graph $G(5)$ is
represented in Figure \ref{fig:G(5)}.

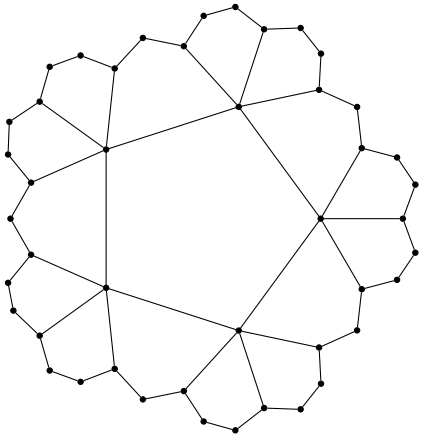
\begin{figure}[htbp]
  \centering
  \begin{tikzpicture}[scale=1.2]

\tikzstyle{every node}=[circle, draw, fill=black!100, inner sep=0pt, minimum width=2pt]

\draw
(0:1.3) \foreach \x in {72,144,...,359} { -- (\x:1.3) } -- cycle
(0:1.3) node (a) {}
(72:1.3) node (b) {}
(144:1.3) node (c) {}
(216:1.3) node (d) {}
(288:1.3) node (e) {};

\draw[shift=(a)]
(0:.9) node (a2) {}
(60:.9) node (a3) {}
(300:.9) node (a1) {};
\draw
(a1)--(a)--(a2)
(a)--(a3);

\draw[shift=(b)]
(72:.9) node (b2) {}
(132:.9) node (b3) {}
(12:.9) node (b1) {};
\draw
(b1)--(b)--(b2)
(b)--(b3);

\draw[shift=(c)]
(144:.9) node (c2) {}
(204:.9) node (c3) {}
(84:.9) node (c1) {};
\draw
(c1)--(c)--(c2)
(c)--(c3);

\draw[shift=(d)]
(216:.9) node (d2) {}
(276:.9) node (d3) {}
(156:.9) node (d1) {};
\draw
(d1)--(d)--(d2)
(d)--(d3);

\draw[shift=(e)]
(288:.9) node (e2) {}
(348:.9) node (e3) {}
(228:.9) node (e1) {};
\draw
(e1)--(e)--(e2)
(e)--(e3);

\draw
(36:2.1) node (f){}
(108:2.1) node (g){}
(180:2.1) node (h){}
(252:2.1) node (i){}
(324:2.1) node (j){};
\draw
(a3)--(f)--(b1)
(b3)--(g)--(c1)
(c3)--(h)--(d1)
(d3)--(i)--(e1)
(e3)--(j)--(a1);

\draw[shift=(a2)]
(70:.4) node (a24){}
(290:.4) node (a21){};
\draw[shift=(a3)]
(345:.4) node(a31){};
\draw[shift=(a1)]
(15:.4) node (a14){};
\draw
(a2)--(a24)--(a31)--(a3)
(a1)--(a14)--(a21)--(a2);

\draw[shift=(b2)]
(142:.4) node (b24){}
(2:.4) node (b21){};
\draw[shift=(b3)]
(57:.4) node(b31){};
\draw[shift=(b1)]
(87:.4) node (b14){};
\draw
(b2)--(b24)--(b31)--(b3)
(b1)--(b14)--(b21)--(b2);

\draw[shift=(c2)]
(214:.4) node (c24){}
(74:.4) node (c21){};
\draw[shift=(c3)]
(129:.4) node(c31){};
\draw[shift=(c1)]
(159:.4) node (c14){};
\draw
(c2)--(c24)--(c31)--(c3)
(c1)--(c14)--(c21)--(c2);

\draw[shift=(d2)]
(286:.4) node (d24){}
(136:.4) node (d21){};
\draw[shift=(d3)]
(201:.4) node(d31){};
\draw[shift=(d1)]
(231:.4) node (d14){};
\draw
(d2)--(d24)--(d31)--(d3)
(d1)--(d14)--(d21)--(d2);

\draw[shift=(e2)]
(358:.4) node (e24){}
(218:.4) node (e21){};
\draw[shift=(e3)]
(273:.4) node(e31){};
\draw[shift=(e1)]
(303:.4) node (e14){};
\draw
(e2)--(e24)--(e31)--(e3)
(e1)--(e14)--(e21)--(e2);

  \end{tikzpicture}
  \vspace*{13pt}
  \caption{The local structure of the graph $G(5)$}
  \label{fig:G(5)}
\end{figure}

Our goal is to study the critical probabilities of these
lattices. The simple lower bound $1/(m-1)\leq p_c$ can be derived
since $1/(m-1)$ is the critical probability for the $m$-regular tree,
and our main concern here is on dealing with upper bounds. Critical
probabilities for hyperbolic tilings
were studied numerically by Baek et al. \cite{BMK09} and also by Gu
and Ziff \cite{GZ11} who obtain a ``Monte Carlo'' upper bound
$p_c<0.34$ for $G(5)$. In 
previous work by the present authors \cite{DZ13}, the rigorous
upper bound $p_c < 0.38$ was obtained for $G(5)$ as a by-product of
the study of the erasure-correcting capabilities of a family of
quantum error-correcting codes. In the present paper we shall obtain
a substantially improved upper bound on critical probabilities that
gives $p_c<0.30$ for $G(5)$.

We remark that we restrict ourselves to the hyperbolic tilings $G(m)$
because they are self-dual and our method is better suited for this
case, but results on the critical probabilities for the self-dual case
can lead to results for the general case \cite{LB12}.

Classically, one uses finite portions of the infinite graph $\G$ to
devise intermediate tools for studying percolation. For example, in
the original $\Z^2$ setting, the standard (by now) method that leads
to the computation $p_c=1/2$ is to consider $n\times n$ finite grids
and study the probability of the appearance of an open path linking
the south boundary to the north boundary (or east to west) \cite{G89}.
In the hyperbolic setting however, trying to mimic this approach
directly quickly leads to serious obstacles: what finite portion of the
infinite graph $G(5)$ (say) should one consider, and which parts of the
boundary should be matched when looking for the appearance of finite
open paths ?
We shall overcome this difficulty by appealing to finite graphs
$G_t(m)$ that are everywhere locally isomorphic to $G(m)$, meaning
that every ball of radius $t$ of $G_t(m)$ is required to be isomorphic
to a ball of radius $t$ in the infinite graph $G(m)$. We shall derive
an upper bound $p_c\leq p_h$ on the critical probability by defining
a quantity $p_h$ such that, when $p>p_h$, then with probability
tending to $1$ when $t$ tends to infinity, $G_t(m)$ must contain 
an open cycle that can not be expressed as a sum modulo $2$ of
elementary faces. Our end result will be an expression for the upper
bound $p_h$ that involves only the structure of the infinite graph
$G(m)$, but the existence of the finite graphs $G_t(m)$ (which is
non-obvious) will be crucial to the derivation of $p_h$.

\paragraph{{\bf Outline and results:}}
Sections \ref{section:siran_graphs} and \ref{section:homology} are background.
In Section \ref{section:siran_graphs} we give a short description of a construction 
of the graphs $G_t(m)$ due to \v{S}ir\'a\v{n}. We shall need to
consider the cycles of those graphs that are not expressible as sums
of faces, i.e. that are homologically non-trivial: we shall therefore
need background on homology that is dealt with in Section \ref{section:homology}.

In Section \ref{section:rank} we study the appearance of homology in
random subgraphs of the finite graphs $G_t(m)$. We introduce a crucial
quantity $D(p)$ that we name the {\em rank difference function} and
that captures the limiting behaviour of the difference of the
dimensions of the homologies of the two random subgraphs of $G_t(m)$ chosen
through the parameters $p$ and $1-p$. We then define the quantity
$$p_h=\sup\left\{p, p-\frac 2m + D(p)=0\right\}.$$
The main result of this section, Theorem \ref{theo:Dequation}, is that
$p_h$ is an upper bound
on the critical probability of $G(m)$. We actually conjecture that for
$m\geq 5$ (i.e. the genuinely hyperbolic, or non-amenable, case)
this upper bound is also a lower bound, i.e. $p_c=p_h$. This would 
show that for these graphs the critical probability is local in a
sense close to \cite{Be11}. That $p_c\leq p_h$ was derived in
\cite{DZ13} in a roundabout way, through the study of the
erasure-decoding capabilities of quantum codes associated to the
tilings $G_t(m)$. The present proof not only removes the reference to
quantum coding, it is intrinsically shorter and more direct.

Section \ref{section:D(p)} is dedicated to finding an explicit
expression for the rank difference function $D(p)$, and hence for the
upper bound $p_h$. Our main result is Theorem~\ref{theo:D_graphical},
which expresses $D(p)$ as the series:
\begin{equation}
  \label{eq:D(p)}
  D(p) = \frac 2 m \sum_{ C } 
\left( \frac 1 {|V(C)|} \left(p^{|E(C)|} (1-p)^{|\partial(C)|} - (1-p)^{|E(C)|} p^{|\partial(C)|} \right) \right),
\end{equation}
where $C$ ranges over all connected subgraphs of $G(m)$ containing a
given vertex, where $V(C),E(C)$ denote the vertex and edge set of $C$,
and where $\partial (C)$ denotes the set of edges with at least one endpoint
in $C$, which are not in $E(C)$.
As mentioned, this expression for $D(p)$ does not involve the
graphs $G_t(m)$ anymore, but its proof crucially relies on their
existence.

Section \ref{section:approximation} proves that replacing
$D(p)$ in \eqref{eq:D(p)} by a truncated series continues to yield an upper
bound on the critical probability $p_c$ of $G(m)$
(Theorem~\ref{theo:bound}). This allows us to compute explicit
numerical upper bounds on $p_c$. Finally, Section \ref{section:conclusion}
summarizes the results with Theorem~\ref{theo:final} and gives some
concluding comments.









\section{Finite quotient of the regular hyperbolic tilings} \label{section:siran_graphs}
We are unaware of any method for constructing the required finite
versions of $G(m)$ that does not involve a fair amount of algebra.
In this section, we briefly recall \v{S}ir\'a\v{n}'s method to
construct such finite versions of the regular hyperbolic tiling $G(m)$.
The first step is to construct $G(m)$ from a group of matrices over a
ring of algebraic integers. 
Then this group is reduced modulo a prime number to yield the desired
finite graph.

Denote by $P_k(X) = 2\cos(k\arccos(X/2))$ the $k$-th normalized
Chebychev polynomial and let $\xi = 2\cos(\pi/m^2)$. Let $m \geq 5$ and consider the group $T(m)$ generated by the two following matrices of $SL_3(\Z[\xi])$.
$$
a =
\left(
\begin{array}{ccc}
P_m(\xi)^2-1 & 0 & P_m(\xi)\\
P_m(\xi) & 1 & 0\\
-P_m(\xi) & 0 & -1
\end{array}
\right)
\quad \text{and} \quad
b =
\left(
\begin{array}{ccc}
-1 & -P_m(\xi) & 0\\
P_m(\xi) & P_m(\xi)^2-1 & 0\\
P_m(\xi) & P_m(\xi)^2 & 1
\end{array}
\right).
$$
The group $T(m)$ admits the presentation
\begin{equation}
  \label{eq:presentation}
  T(m) = \gen{ a, b \ | \ a^m = b^m = (ab)^2 = 1 }.
\end{equation}

With this group we associate its \emph{coset graph}. The
coset graph associated with \eqref{eq:presentation}
is defined to be the infinite planar tiling whose vertex set, respectively edge set and face set, is the set of left cosets of the subgroup $\gen{a}$, respectively the set of left cosets of the subgroup $\gen{ab}$ and the subgroup $\gen{b}$. A vertex and an edge, or an edge and a face, are incident if and only if the corresponding cosets have a non-empty intersection.

For example, the coset $\gen{a} = \{1, a, a^2, \dots, a^{m-1} \}$ defines a vertex of the graph $G(m)$ and is incident to the $m$ edges represented by the cosets
$$
\gen{ab},\  a\gen{ab},\  a^2\gen{ab},\  \dots,\  a^{m-1} \gen{ab}.
$$
We can see that the coset graph is $m$-regular and that its faces
contain $m$ edges. It is straightforward to check that the coset graph associated with \eqref{eq:presentation} is the infinite planar graph $G(m)$ \cite{Si00}.

The basic idea to derive a finite version of this tiling is to reduce
the matrices defining the group $T(m)$ modulo a prime number. 
We can reduce the coefficients of the matrices of $T(m)$ thanks to the ring isomorphism $\Z[\xi] \simeq \Z[X]/h(X)$, where $h(X) \in \Z[X]$ is the minimal polynomial of the algebraic number $\xi$. This induces a ring morphism $\pi_p: SL_3(\Z[\xi]) \rightarrow SL_3(\F_p[X]/\bar h(X))$ where $\bar h(X)$ is the reduction modulo $p$ of the polynomial $h(X)$. Denote by $\bar T^p(m)$ the image of the group $T(m)$ by the morphism $\pi_p$.
The coset graph associated with the group $\bar T^p(m)$ is defined from the cosets of $\bar T^p(m)$, exactly like the coset graph of $T(m)$.

\v{S}ir\'a\v{n} proved that for a well chosen family of prime numbers
$p$, this construction provides a 
sequence of finite tilings $(G_t(m))_t$ which is locally isomorphic to
the infinite tiling $G(m)$ \cite{Si00}. Precisely:
\begin{theo}
\label{theo:siran}
For every integer $m \geq 5$, there exists a family of finite tilings
$(G_t(m))_{t \geq m}$ and some constant $K$ such that every ball of
radius $t$ of $G_t(m)$ is isomorphic to every ball of radius $t$ in
$G(m)$. Furthermore, the number of vertices of $G_t(m)$ is at most $K^t$.
\end{theo}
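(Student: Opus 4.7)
The plan is to realize the finite tiling $G_t(m)$ as the coset graph of a finite quotient $\bar T^p(m)$ of $T(m)$ for a carefully chosen prime $p$, following the \v{S}ir\'a\v{n} strategy already sketched. Granting (as in the excerpt) that the coset graph of $T(m)$ itself is the infinite tiling $G(m)$, the task reduces to exhibiting, for each $t \geq m$, a prime $p_t$ such that (i) the reduction $\pi_{p_t}$ preserves local structure up to radius $t$, and (ii) the quotient $\bar T^{p_t}(m)$ has size at most exponential in $t$.

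First I would translate (i) into purely group-theoretic language. Adjacency in the coset graph is witnessed by words in $\{a,b,a^{-1},b^{-1}\}$ of length bounded by a constant (an edge corresponds to an element of $\gen{a}\gen{ab}\gen{a}$, of length at most $2m+2$), so every ball of radius $t$ is entirely determined by the set of words of length at most $Ct$ (for an absolute $C = C(m)$) that evaluate to the identity. Writing $W_t \subseteq T(m)$ for the set of elements representable by such words and $S_t = W_t \setminus \{1\}$, the vertex-transitivity of the coset graph implies that local isomorphism up to radius $t$ is equivalent to the injectivity of $\pi_p$ on $W_t$, i.e.\ $\pi_p(w) \neq 1$ for every $w \in S_t$.

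Next I would count the \emph{bad} primes, namely those for which some $w \in S_t$ is killed by $\pi_p$. The entries of $a, b$ are algebraic integers of bounded size in $\Z[\xi]$, so in any product of length $n$ the entries are polynomials in $\xi$ (mod $h$) of bounded degree whose rational integer coefficients have magnitude at most $M^n$ for some $M = M(m)$. For each $w \in S_t$, the matrix $w - I$ is nonzero, so at least one such coefficient is a nonzero rational integer of magnitude $\leq M^{O(t)}$; and $\pi_p(w) = 1$ forces $p$ to divide this integer. Hence each $w \in S_t$ excludes $O(t)$ primes, and since $|S_t| \leq 4^{Ct}$ the total number of bad primes is at most $e^{O(t)}$, all contained in the interval $[2, M^{O(t)}]$. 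Standard prime-number-theorem estimates then produce a \emph{good} prime $p_t$ of magnitude at most $e^{O(t)}$.

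Finally, since $\bar T^{p_t}(m) \subseteq SL_3(\F_{p_t}[X]/\bar h(X))$ has cardinality bounded by $p_t^{9\deg h}$, with $\deg h$ depending only on $m$, we obtain $|V(G_t(m))| = |\bar T^{p_t}(m)|/m \leq e^{O(t)}$, which is at most $K^t$ for a suitable $K = K(m)$. The main obstacle in this program is the tension between the two competing requirements on $p_t$: it must avoid an exponentially large set of bad primes (forcing it to be exponentially large in $t$), yet the final size bound $p_t^{9\deg h}$ must remain singly exponential in $t$. The counting in the preceding paragraph is precisely what shows these demands are simultaneously satisfiable, and this balance is the essence of \v{S}ir\'a\v{n}'s construction.
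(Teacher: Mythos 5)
The paper does not prove this theorem itself---it is imported verbatim from \v{S}ir\'a\v{n} \cite{Si00}---and your outline follows precisely the mod-$p$ reduction strategy that Section \ref{section:siran_graphs} sketches: realize $G_t(m)$ as the coset graph of $\bar T^{p_t}(m)$, translate local isomorphism into the non-vanishing of $\pi_{p_t}$ on short words, and bound the resulting graph by the size of $SL_3(\F_{p_t}[X]/\bar h(X))$. Your counting of bad primes (each nontrivial short word has a nonzero matrix coefficient of magnitude $M^{O(t)}$, hence excludes only $O(t)$ primes, against which a Chebyshev-type density estimate supplies a good prime $p_t = e^{O(t)}$) is indeed the crux of \v{S}ir\'a\v{n}'s argument, so the proposal is a correct, if schematic, reconstruction of the cited proof.
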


By construction, the graphs $G_t(m)$ are vertex transitive. Indeed, each element
of the group $\bar T(m)$ induces a graph automorphism of the coset
graph by left multiplication. An automorphism which sends a vertex $x
\gen{a}$ 
onto the vertex $y \gen{a}$ is given by the left multiplication by
$yx^{-1}$ of the cosets representing the vertices. For the same
reason, $G_t(m)$ is also edge-transitive and face-transitive.

To be sure that the faces of the graph $G_t(m)$ are not degenerate, we
require $t \geq m$. 
We will also use the fact that $G_t$ is a self-dual graph. This is a consequence of the local structure of the graph: every vertex has degree $m$ and every face has length $m$.


\section{Background on homology}\label{section:homology}


\subsection{Homology of a tiling of surface}
\label{subsection:homology}

A \emph{tiling of a surface} is a graph cellularly embedded in a smooth
surface. For us only the combinatorial structure of the surface plays
a role, therefore a face of the tiling is represented as the set of
edges on its boundary. 
We denote by $G=(V, E, F)$ such a tiling, where $F$ is the set of
faces that, as far as homology is concerned, can be thought of simply as a
privileged set of cycles of the graph $(V,E)$. 
With a tiling of a surface, we associate a \emph{dual tiling}
$G^*=(V^*,E^*,F^*)$. The vertices of this dual tiling are given by the faces of
$G$. Two vertices of $G^*$ are joined by an edge if the corresponding
faces of $G$ share an edge. Since every edge of $E$ belongs to exactly
two faces of $F$, there is a one-to-one
correspondence between edges of $G$ and edges of $G^*$. Finally, for
every vertex $v$ of $V$ the set of edges of $E$ incident to $v$
defines a face of $F^*$ through the above correspondence between $E$ and $E^*$.
 We assume the graph and its dual have neither multiple edges nor
 loops.
We shall also use $G$ to refer indifferently to
the graph $(V,E)$ and to the associated tiling $(V,E,F)$. 

In the remainder of this section, we consider only finite tilings, and we order the three sets $V, E$ and $F$ by $V = \{v_1,
v_2, \dots, v_{|V|}\}$, $E = \{e_1, e_2, \dots, e_{|E|}\}$ and $F =
\{f_1, f_2, \dots, f_{|F|}\}$. 
The \emph{incidence matrix} of the graph $(V,E)$ is defined to be the matrix $B(G) = (b_{ij})_{i, j}$ of $\mc M_{|V|, |E|}(\F_2)$ such that $b_{ij} = 1 $ if the vertex $v_i$ is incident to the edge $e_j$, and $b_{ij} = 0$ otherwise.

To emphasize the $\F_2$-linear structure of some subsets of $V$, $E$ and $F$, we introduce the spaces of \emph{$i$-chains} $C_i$:
$$
C_0 = \bigoplus_{v \in V} \F_2 v, \quad C_1 = \bigoplus_{e \in E} \F_2 e, \quad C_2 = \bigoplus_{f \in F} \F_2 f.
$$
In other words, the space $C_0 = \{\sum_v \lambda_v v \ | \ \lambda_v \in \F_2\}$ is the set of formal sums of vertices. The sets $C_1$ and $C_2$ are defined similarly.
These chain spaces are equipped with two $\F_2$-linear mappings
$
\partial_2: C_2 \rightarrow C_1
$
and
$
\partial_1: C_1 \rightarrow C_0
$
defined by $\partial_2(f) = \sum_{e \in f} e$ and $\partial_1(e) = \sum_{u \in e} u$.
These mappings are called \emph{boundary maps}.

A subset of the vertex set, respectively the edge set or the face set,
can be regarded as its indicator vector in $C_0$, respectively $C_1$
or $C_2$. 
This yields one-to-one correspondences between subsets and vectors,
which allow us to interpret geometrically the boundary maps. In subset
language, the map $\partial_2$ sends a subset of faces onto the set of
edges on its boundary in the standard sense, and the map $\partial_1$ sends a subset of edges
onto its ``endpoints'' which should be understood modulo $2$, i.e. the
set of vertices incident to an odd number of edges in the subset.

The singletons $\{v_i\}$, respectively $\{e_i\}$ and $\{f_i\}$, form a
basis of the space $C_0$, respectively $C_1$ and $C_2$. 
The matrix of the map $\partial_1$ in these singleton bases is equal
to the incidence matrix $B(G)$
of the graph $(V, E)$ and the matrix of the map $\partial_2$ is equal
to the transpose of the incidence matrix $B(G^*)$ of $(V^*,E^*)$.

We can easily prove that the composition of these applications is $\partial_1 \circ \partial_2 = 0$, implying the inclusion $\im \partial_2 \subset \Ker \partial_1$. We can now introduce the $\F_2$-homology of tilings of surfaces.
\begin{defi}
The \emph{first homology group} of a finite tiling of a surface $G$, denoted $H_1(G)$, is the quotient space
$$
H_1(G) = \Ker \partial_1 / \im \partial_2.
$$
\end{defi}

Note that $H_1(G)$ is also an $\F_2$-vector space.
The vectors of $\ker \partial_1$ are called \emph{cycles}. They
correspond to the subsets of edges that
meet every vertex an even number of times. The set $\ker \partial_1$
of cycles of a graph is an $\F_2$-linear space that we refer to as the
\emph{cycle code} of the graph. The vectors of $\im \partial_2$ are
called \emph{boundaries} or sums of faces and they describe the sets
of edges on the boundary of a subset of $F$.

In what follows, we shall study the dimension of the homology group of
different tilings of surfaces. The following well known property (see
e.g. \cite{Berge73} for a proof) is used repeatedly .
\begin{lemma} \label{lemma:Dcycle}
The dimension of the cycle code of a graph $G=(V, E)$ composed of $\kappa$ connected components, is
$
|E| - |V| + \kappa.
$
\end{lemma}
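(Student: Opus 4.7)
The statement is the classical formula $\dim \ker \partial_1 = |E| - |V| + \kappa$, so the plan is to compute $\rank \partial_1$ and invoke the rank–nullity theorem, which gives $\dim \ker \partial_1 = |E| - \rank \partial_1$. Thus everything reduces to showing $\rank \partial_1 = |V| - \kappa$, where $\partial_1$ is represented by the incidence matrix $B(G) \in \mc M_{|V|,|E|}(\F_2)$.

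First I would reduce to the connected case. After reordering rows and columns so that vertices and edges of each connected component are grouped together, $B(G)$ becomes block-diagonal with one block $B(G_i)$ for each connected component $G_i$. The rank is additive across blocks, so it suffices to prove that a connected graph on $n$ vertices has incidence matrix of rank exactly $n-1$ over $\F_2$.

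For the connected case, the upper bound $\rank B \leq n-1$ is immediate: each column of $B$ corresponds to an edge and has exactly two $1$'s (the two endpoints), so the sum of all rows is the zero vector in $\F_2^{|E|}$, giving one linear dependence among the rows. The lower bound is the part that requires a little work, and is the main (mild) obstacle. The plan is to select a spanning tree $T$ of the connected graph, which exists by connectedness and has $n-1$ edges, and to prove that the $n-1$ columns of $B$ indexed by the edges of $T$ are linearly independent. I would argue by induction on $n$: a tree on $n \geq 2$ vertices has a leaf $v$, incident to a unique edge $e$ of $T$; the column indexed by $e$ is the only column of the submatrix with a $1$ in row $v$. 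Hence $e$ cannot appear in any non-trivial linear dependence, and after deleting row $v$ and column $e$ we are left with the incidence matrix (restricted to $T\setminus e$) of a tree on $n-1$ vertices, to which the induction hypothesis applies.

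Putting the pieces together: if $G$ has connected components $G_1,\dots,G_\kappa$ with vertex sets of sizes $n_1,\dots,n_\kappa$, then
\[
\rank \partial_1 = \sum_{i=1}^\kappa \rank B(G_i) = \sum_{i=1}^\kappa (n_i - 1) = |V| - \kappa,
\]
and the rank–nullity theorem then yields $\dim \ker \partial_1 = |E| - |V| + \kappa$, as claimed. The only nontrivial ingredient is the leaf-peeling induction showing that spanning-tree columns are linearly independent over $\F_2$; everything else is bookkeeping.
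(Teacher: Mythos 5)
Your proof is correct. The paper does not actually prove this lemma — it is stated as a well-known fact with a citation to Berge — so there is no in-paper argument to compare against. Your route (block-diagonal reduction to the connected case, the sum-of-all-rows relation for the upper bound $\rank B \leq n-1$, and the spanning-tree/leaf-peeling induction for the lower bound, followed by rank–nullity) is the standard textbook proof over $\F_2$, and every step is sound given the paper's standing assumption that the graphs have no loops.
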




Figure~\ref{fig:torus}(a) represents a square lattice of the torus. A
cycle of trivial homology is drawn 
on Figure~\ref{fig:torus}(b). This cycle is clearly a sum of faces. 
Two examples of cycles with non trivial homology are given in Figure~\ref{fig:torus}(c) and (d). The first homology group of this tiling of the torus is a binary space of dimension 2. It is generated, for example, by an horizontal cycle which wraps around the torus, such as the one in Figure~\ref{fig:torus}(c) and a vertical cycle which wraps around the torus. The cycle of Figure~\ref{fig:torus}(d) is equivalent to the sum of these horizontal and vertical cycles, up to a sum of faces.

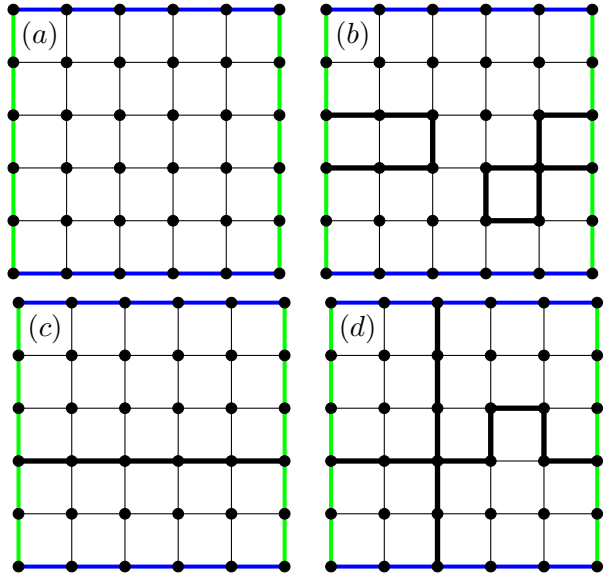
\begin{figure}[htbp]
\begin{center}
\begin{tikzpicture}[scale=.7]
\tikzstyle{every node}=[circle, draw, fill, inner sep=0pt, minimum width=4pt];

\draw[step=1cm] (0,0) grid (5,5);

\draw[color=blue, line width=1.5pt]
	(0,0)--(5,0)
	(0,5)--(5,5);

\draw[color=green, line width=1.5pt]
	(0,0)--(0,5)
	(5,0)--(5,5);

\foreach \x in {0,1,...,5}
	\foreach \y in {0,1,...,5}
		\draw (\x, \y) node {};
		
\tikzstyle{every node}=[inner sep=0pt, minimum width=4pt];
\node at (0.5,4.5) {$(a)$}; 
\end{tikzpicture}
\hspace{.2cm}
\begin{tikzpicture}[scale=.7]
\tikzstyle{every node}=[circle, draw, fill, inner sep=0pt, minimum width=4pt];

\draw[step=1cm] (0,0) grid (5,5);

\draw[color=blue, line width=1.5pt]
	(0,0)--(5,0)
	(0,5)--(5,5);

\draw[color=green, line width=1.5pt]
	(0,0)--(0,5)
	(5,0)--(5,5);

\foreach \x in {0,1,...,5}
	\foreach \y in {0,1,...,5}
		\draw (\x, \y) node {};
		
\draw[line width = 2pt]
	(5,2)--(3,2)--(3, 1)--(4,1)--(4,3)--(5,3)
	(0,2)--(2,2)--(2,3)--(0,3);
	
\tikzstyle{every node}=[inner sep=0pt, minimum width=4pt];
\node at (0.5,4.5) {$(b)$};
\end{tikzpicture}
\vspace{.2cm}

\begin{tikzpicture}[scale=.7]
\tikzstyle{every node}=[circle, draw, fill, inner sep=0pt, minimum width=4pt];

\draw[step=1cm] (0,0) grid (5,5);

\draw[color=blue, line width=1.5pt]
	(0,0)--(5,0)
	(0,5)--(5,5);

\draw[color=green, line width=1.5pt]
	(0,0)--(0,5)
	(5,0)--(5,5);

\foreach \x in {0,1,...,5}
	\foreach \y in {0,1,...,5}
		\draw (\x, \y) node {};
		
\draw[line width = 2pt]
	(0,2)--(5,2);
			
\tikzstyle{every node}=[inner sep=0pt, minimum width=4pt];
\node at (0.5,4.5) {$(c)$}; 
\end{tikzpicture}
\hspace{.2cm}
\begin{tikzpicture}[scale=.7]
\tikzstyle{every node}=[circle, draw, fill, inner sep=0pt, minimum width=4pt];

\draw[step=1cm] (0,0) grid (5,5);

\draw[color=blue, line width=1.5pt]
	(0,0)--(5,0)
	(0,5)--(5,5);

\draw[color=green, line width=1.5pt]
	(0,0)--(0,5)
	(5,0)--(5,5);

\foreach \x in {0,1,...,5}
	\foreach \y in {0,1,...,5}
		\draw (\x, \y) node {};
		
\draw[line width = 2pt]
	(0,2)--(3,2)--(3,3)--(4,3)--(4,2)--(5,2)
	(2,0)--(2,5);
	
\tikzstyle{every node}=[inner sep=0pt, minimum width=4pt];
\node at (0.5,4.5) {$(d)$};
\end{tikzpicture}

\caption{(a) A square tiling of the torus. The opposite boundaries are identified. (b) A cycle which is a boundary. (c) A cycle which is not a boundary. (d) A cycle which is not a boundary.}
\label{fig:torus}
\end{center}
\end{figure}

\subsection{Induced homology of a subtiling}
\label{section:subhomology}

Percolation theory deals with random subgraphs of a given graph. In what follows, we introduce the homology of a subgraph of a given tiling $G$.

The subgraphs that we consider are obtained by selecting a subset of
edges. Denote by $G = (V, E, F)$ a tiling of surface and let us
consider the subgraph $G_{\e}$ of $G$ whose vertex set is exactly $V$
and whose edge set is a given subset $\e$ of $E$. This graph is not
immediately endowed with a set of faces and with a homology group. 
The proper notion of homology for our purpose is obtained by
considering the boundaries of the tiling $G$ which are included in the
subgraph~$G_{\e}$. More precisely, the subset of edges $\e$ defines
the subspace $C_0^\e=C_0$, the subspace $C_1^\e$ of $C_1$ made up of
all formal sums of edges of $\e$, and the subspace $C_2^\e$ of $C_2$
made up of all those vectors of $C_2$
whose image under $\partial_2$ is included in $C_1^\e$. The mappings
$\partial_1^\e$ and $\partial_2^\e$ are defined as the restrictions of
$\partial_1$ and $\partial_2$ to $C_1^\e$ and $C_2^\e$.

\begin{defi}
Let $G=(V, E, F)$ be a tiling of a surface and let $\e \subset E$. 
The \emph{induced homology group} of $G_{\e}$ is the quotient space
$$
H_1(G_{\e}) = \Ker \partial_1^{\e} / ( \im \partial_2^\e).
$$
\end{defi}

For more detailed background on the homology of surfaces and their
tilings see \cite{Ha02, Gi10}.

\section{Appearance of homology in a random subgraph of $G_t$}\label{section:rank}

\subsection{Homology of a subgraph}

This section is devoted to the analysis of the induced homology of a
subgraph of $G_t(m)$. To lighten notation we omit the indices $m$ and
$t$ and write $G=G_t(m)$. 
Following the notation of Section~\ref{section:subhomology}, $\e$ denotes a subset of $E$ and $G_{\e}$ denotes the subgraph of $G$ induced by $\e$.

The decomposition of the graph $G_{\e}$ into connected components induces a partition of the edges of $\e$: 
the set $\e$ is the disjoint union of the subsets $\e_i \subset E$,
for $i=1, 2, \dots, r$ and where each set $\e_i$ is the edge set of a
connected component of $G_{\e}$. 
The following lemma proves that this decomposition of the graph
$G_{\e}$ induces a decomposition of its homology group.

\begin{lemma} \label{lemma:H_decomposition}
Let $\e = \cup_{i=1}^r \e_i$ be the partition of $\e$ derived from the
decomposition of the graph $G_{\e}$ 
into connected components. Then, the dimension of the first homology group of $G_{\e}$ is at most
$$
\dim H_1(G_{\e}) \leq \sum_{i=1}^r \dim H_1(G_{\e_i}).
$$
\end{lemma}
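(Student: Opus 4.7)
The plan is to exploit the dimension formula $\dim H_1(G_\e) = \dim \Ker \partial_1^\e - \dim \im \partial_2^\e$, and the analogous formula for each $G_{\e_i}$, and to compare the kernel and image terms separately across the partition. The guiding observation is that cycles decompose exactly along the connected components (producing an equality on the kernel side), whereas the boundaries satisfy only an inclusion in the right direction (producing an inequality on the image side), which already suffices for the claimed bound.

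First, I would establish the direct sum decomposition $\Ker \partial_1^\e = \bigoplus_{i=1}^r \Ker \partial_1^{\e_i}$. Given $c \in \Ker \partial_1^\e$, write $c = \sum_i c_i$ with $c_i$ the restriction of $c$ to $\e_i$. Any vertex $v$ incident to an edge of $c_i$ lies in the $i$-th connected component of $G_\e$, and by definition of connected component all edges of $\e$ incident to such a $v$ belong to $\e_i$. Consequently the contribution of $c_i$ to $\partial_1$ at $v$ equals that of $c$, so $\partial_1(c)=0$ forces $\partial_1(c_i)=0$, and the sum is direct because the $\e_i$ are pairwise disjoint. Hence $\dim \Ker \partial_1^\e = \sum_i \dim \Ker \partial_1^{\e_i}$.

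Next I would handle the image of $\partial_2^\e$. The inclusion $\e_i \subset \e$ gives $C_1^{\e_i} \subset C_1^\e$, hence $C_2^{\e_i} \subset C_2^\e$, so $\im \partial_2^{\e_i} \subset \im \partial_2^\e$ for each $i$. Since each element of $\im \partial_2^{\e_i}$ is supported on $\e_i$ and the $\e_i$ are pairwise disjoint, the subspaces $\im \partial_2^{\e_i}$ form a direct sum inside $\im \partial_2^\e$, giving $\dim \im \partial_2^\e \geq \sum_i \dim \im \partial_2^{\e_i}$. Combining with the kernel equality yields the claimed bound.

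The main subtlety, and the reason equality fails in general, is that $C_2^\e$ can be strictly larger than $\bigoplus_i C_2^{\e_i}$: a formal sum of faces may have its boundary contained in $\e$ even though no individual face in the sum does, because edges shared by neighbouring faces can cancel in $\F_2$. This is precisely why the argument must proceed through the one-sided inequality on $\dim \im \partial_2^\e$ rather than through a direct sum decomposition of the whole chain complex; the kernel side is clean, the image side is the one requiring care.
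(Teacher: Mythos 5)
Your proof is correct and follows essentially the same route as the paper's: an exact direct-sum decomposition of $\Ker\partial_1^\e$ along the connected components, combined with the one-sided inclusion $\bigoplus_i \im\partial_2^{\e_i} \subset \im\partial_2^\e$, the only cosmetic difference being that you compare dimensions directly while the paper compares the quotient spaces. Your closing remark about $C_2^\e$ possibly exceeding $\bigoplus_i C_2^{\e_i}$ is exactly the subtlety the paper itself flags as the reason the statement is an inequality rather than an equality.
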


\begin{proof}
Remark that the chain space $C_1^\e$ decomposes as $C_1^\e = \oplus_i C_1^{\e_i}$. This leads to a similar decomposition of the cycle code of $G_\e$.
$$
\Ker \partial_1^{\e} = \bigoplus_{i=1}^r \Ker \partial_1^{\e_i}.
$$
However, the image of $\im \partial_2^\e$ has a slightly different structure.
First, the chain space $C_2^\e$ is has no similar decomposition but it
still contains the direct sum $\oplus_i C_2^{\e_i}$. 
Hence, the image of $\im \partial_2^\e$ contains the direct sum
$
\bigoplus_{i=1}^r \im \partial_2^{\e_i}
$
as a subspace. This implies
\begin{align*}
\dim H_1(G_{\e}) 
& = \dim \left( \bigoplus_{i=1}^r \Ker \partial_1^{\e_i} / \im \partial_2^{\e} \right) \\
& \leq \dim \left( \bigoplus_{i=1}^r \Ker \partial_1^{\e_i} / \bigoplus_{i=1}^r \im \partial_2^{\e_i} \right).
\end{align*}
To conclude, notice that this last quotient is exactly the direct sum $\oplus_i H_1(G_{t, \e_i})$.
\end{proof}

The next lemma proves that if $\e$ is composed of small clusters, then it covers no homology.
\begin{lemma} \label{lemma:smallclusters}
Let $G_{\e}$ be a connected subgraph of $G=G_t(m)$. If $\e$ contains at
most $t$ edges, then we have $H_1(G_{\e}) = \{0\}$.
\end{lemma}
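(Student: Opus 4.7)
The plan is to transport the problem from $G_t(m)$ to the infinite planar tiling $G(m)$ via the local isomorphism provided by Theorem~\ref{theo:siran}, and then exploit the planarity of $G(m)$. First, since $G_\e$ is connected with $|\e|\leq t$ edges, it has diameter at most $t$. Picking any vertex $v_0$ of $G_\e$ and letting $B_t(v_0)$ denote the ball of radius $t$ around $v_0$ in $G_t(m)$, we therefore have $G_\e \subseteq B_t(v_0)$. By Theorem~\ref{theo:siran}, this ball is isomorphic, as a sub-tiling (including faces), to a ball $B_t(v_0')$ in $G(m)$; let $\phi$ denote this isomorphism.

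Given a cycle $\gamma \in \Ker \partial_1^\e$, its image $\phi(\gamma)$ is a cycle of $G(m)$ supported inside $B_t(v_0')$. The key point is that because $G(m)$ tiles the plane and balls in such a tiling are topological disks, any cycle supported in $B_t(v_0')$ bounds a $2$-chain $\sigma'$ whose support consists of faces of $G(m)$ that also lie in $B_t(v_0')$. Pulling $\sigma'$ back by $\phi^{-1}$ produces a $2$-chain $\sigma$ of $G_t(m)$ supported on faces inside $B_t(v_0)$ and satisfying $\partial_2(\sigma) = \gamma$. Since $\partial_2(\sigma)$ lies in $C_1^\e$, the chain $\sigma$ belongs to $C_2^\e$, so $\gamma \in \im\, \partial_2^\e$; this proves $H_1(G_\e) = \{0\}$.

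The main obstacle is justifying that the ball $B_t(v_0')$ in $G(m)$ is indeed a topological disk, so that any cycle supported in it bounds a union of faces lying inside it. This rests on the simple connectivity of the plane together with the tiling structure of $G(m)$: the faces of $G(m)$ whose vertices all lie in $B_t(v_0')$ glue along shared edges into a simply connected planar region, and by the Jordan curve theorem any cycle in such a region encloses a union of these faces. This can be made rigorous by a short induction on $t\geq m$, showing that each successive layer of faces preserves simple connectivity; the hypothesis $t\geq m$ is what makes the argument start, since it guarantees that the first layer of faces surrounding $v_0'$ lies in $B_t(v_0')$.
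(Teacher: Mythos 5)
Your proof follows the same route as the paper's: embed the connected subgraph $G_\e$ in a ball of radius $t$, use Theorem~\ref{theo:siran} to identify that ball with a ball of the planar tiling $G(m)$, and conclude from planarity that every cycle of $G_\e$ is a boundary, hence $H_1(G_\e)=\{0\}$. The only difference is that you spell out why a cycle inside such a ball bounds a $2$-chain of faces lying in the same ball (so that it pulls back to a sum of faces of $G_t(m)$), a point the paper's proof treats as immediate from planarity.
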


\begin{proof}
Since $G_{\e}$ is connected and contains less than $t$ edges, it is
included in a ball of radius~$t$. 
From Theorem~\ref{theo:siran}, this ball is isomorphic with a ball of
the planar graph $G(m)$. But 
this ball is itself planar and in a planar graph, every cycle is a boundary. Thus the group $H_1(G_{\e})$ is trivial.
\end{proof}

The next lemma will allow us to compute the dimension of the induced
homology group of every subgraph $G_{\e}$ of $G=G_t(m)$.
Since a set $\e \subset E$ can be regarded as a subset of $E^*$, it
also defines a subgraph $G^*_{\e}$ of the graph $G^*$. Let us denote
by $\rank G_\e$ ($\rank G_\e^*$) the rank of an incidence matrix of
$G_\e$ (of  $G^*_{\e}$). By Lemma~\ref{lemma:Dcycle} these ranks do
not depend on the choice of the incidence matrix of the graph. The
dimension of the induced homology group is given by:

\begin{lemma} \label{lemma:dimH}
For every $\e \subset E$, we have
$$
\dim H_1(G_{\e}) = |\e| - \frac{2}{m} |E|  + 1 + \rank G^*_{\bar \e} - \rank G_{\e}.
$$
\end{lemma}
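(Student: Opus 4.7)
The plan is to expand $\dim H_1(G_\e) = \dim \Ker \partial_1^\e - \dim \im \partial_2^\e$ and handle each dimension separately, using the identity $|F| = 2|E|/m$ at the end (which holds because every face of $G$ has length $m$, so $m|F| = 2|E|$).

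For $\Ker \partial_1^\e$, the matrix of $\partial_1^\e$ in the singleton bases is precisely the sub-matrix of $B(G)$ with columns indexed by $\e$, i.e.\ the incidence matrix of $G_\e$ (taken with all vertices of $V$). Rank-nullity therefore gives $\dim \Ker \partial_1^\e = |\e| - \rank G_\e$.

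For $\im \partial_2^\e$ I would switch to the dual graph. Let $\pi_{\bar \e}: C_1 \to C_1^{\bar \e}$ be the coordinate projection onto $\bar \e$. Directly from the definition of $C_2^\e$ as $\partial_2^{-1}(C_1^\e)$, one has $C_2^\e = \Ker(\pi_{\bar \e} \circ \partial_2)$. The matrix of $\pi_{\bar \e} \circ \partial_2$, indexed by $\bar \e \times F$, has a $1$ at $(e,f)$ exactly when $e \in f$; since the matrix of $\partial_2$ is $B(G^*)^T$, this sub-matrix is the transpose of the incidence matrix of $G^*_{\bar \e}$. Hence $\rank(\pi_{\bar \e} \circ \partial_2) = \rank G^*_{\bar \e}$ and $\dim C_2^\e = |F| - \rank G^*_{\bar \e}$. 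Since $\Ker \partial_2 \subset C_2^\e$, rank-nullity applied to $\partial_2^\e$ yields $\dim \im \partial_2^\e = \dim C_2^\e - \dim \Ker \partial_2$.

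The remaining ingredient, and arguably the subtlest, is to identify $\dim \Ker \partial_2 = 1$. Any $S \subset F$ with $\partial_2(S) = 0$ covers each edge $0$ or $2$ times, so in the dual graph $G^*$ the set $S \subset V^*$ is a union of connected components of $G^*$. Since $G_t(m)$ is connected (being vertex-transitive and nonempty) and $G^*$ is connected whenever $G$ is, the only solutions are $S = \emptyset$ and $S = F$, giving $\dim \Ker \partial_2 = 1$. Combining everything,
$$
\dim H_1(G_\e) = \bigl(|\e| - \rank G_\e\bigr) - \bigl(|F| - \rank G^*_{\bar \e} - 1\bigr) = |\e| - \frac{2}{m}|E| + 1 + \rank G^*_{\bar \e} - \rank G_\e,
$$
as claimed.
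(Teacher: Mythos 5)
Your proof is correct and follows essentially the same route as the paper: both reduce $\dim \im \partial_2^{\e}$ to $\rank G^*_{\bar\e}$ together with the connectivity of the dual, the only difference being that you do the bookkeeping on the domain side (computing $\dim C_2^{\e}$ as the nullity of $\pi_{\bar\e}\circ\partial_2$ and subtracting $\dim\Ker\partial_2=1$), whereas the paper restricts the image $\im\partial_2$ to the $\bar\e$-coordinates and takes a kernel there --- two views of the same rank computation, since $\dim\Ker\partial_2=1$ is equivalent to the paper's $\rank G^*=|F|-1$. One small quibble: vertex-transitivity plus nonemptiness does not by itself imply connectedness of $G_t(m)$ (that follows from its construction as a coset graph of a group generated by $a$ and $b$), but the paper assumes connectivity at the same point, so this is not a genuine gap.
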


\begin{proof}
The group $H_1(G_{\e})$ is the quotient of the cycle code of $G_{\e}$
by $\im\partial_2^\e$, the set of boundaries of $G$ which are included in the subgraph $\e$.

By definition, the cycle code of $G_{\e}$ is the kernel of the map
$\partial_1^{\e}$. 
Moreover, the incidence matrix of $G_{\e}$ is a matrix of this linear
map. Therefore, the dimension of the cycle code of the subgraph
$G_{\e}$ is 
\begin{equation}
  \label{eq:ker}
  \dim\ker\partial_1^{\e}=|\e| - \rank G_{\e}.
\end{equation}
The set of boundaries of $G$ is the image of the map $\partial_2$. 
We noticed in Section~\ref{subsection:homology} that a matrix of the
map $\partial_2$ is given by the transpose of $B(G^*)$, the incidence
matrix of $G^*$. This means that the boundaries of $G$ correspond to
the sums of rows of $B(G^*)$. 
These are the vectors of the form $x B(G^*)$, where $x$ is a binary vector.

Consider the incidence matrix of $G^*_{\bar \e}$, where $\bar \e$
denotes the complement of $\e$ in $E$. 
This matrix can be obtained from $B(G^*)$ by selecting the columns indexed by the edges in $\bar \e$.
Let us define a map $\phi$ which sends a sum of rows of $B(G^*)$ onto
the same sum of rows in the matrix 
$B(G^*_{\bar \e})$. It is the map
\begin{align*}
\phi : \im \partial_2 &\longrightarrow C_1^{\bar \e}\\
x B(G^*) & \longmapsto x_{\bar \e} B(G^*_{\bar \e}),
\end{align*}
where $x$ is a row vector of $\F_2^{|V|}$ and $x_{\bar \e}$ is its
restriction to the columns indexed by the edges of $\bar \e$.
Then, the boundaries of $G$ included in $\e$, are exactly the vectors of the kernel of $\phi$.
The dimension of this space is
\begin{equation}
  \label{eq:kerphi}
  \dim\im\partial_2^\e=\dim \ker \phi = \dim\im \partial_2 - \dim\im\phi = \rank G^* - \rank G^*_{\bar \e}.
\end{equation}
Now $\rank G^*=\dim\im\partial_1^* =
|E^*|-\dim\ker\partial_1^*$. Applying Lemma~\ref{lemma:Dcycle} to the
dimension of the cycle code $\ker\partial_1^*$ of $G^*$ and the fact
that $G=G_t(m)$ is connected, we get $\rank G^* = |F| - 1 = (2/m)|E|
-1$. Injecting this last fact into \eqref{eq:kerphi}, we obtain,
together with \eqref{eq:ker}, the formula for $\dim
H_1(G_{\e})=\dim\ker\partial_1^\e - \dim\im\partial_2^\e$.
\end{proof}

\subsection{The rank difference function}

We now consider the probabilistic behaviour of
the induced homology of a random subgraph of $G_t=G_t(m)$. To get a
distribution which locally coincides with the distribution of
percolation events, the subset of edges $\e$ is chosen by selecting each edge of $G_t$ independently with probability $p$. This defines a random subgraph $G_{t, \e}$ of the graph $G_t$.

The intuition we follow is that if we are below the critical
probability of the graph $G(m)$, then most connected components
appearing in the random subgraph $G_{t, \e}$ should be small. Thanks
to Lemma~\ref{lemma:smallclusters}, 
these clusters do not support any non trivial homology. 
This implies that if $p < p_c(G(m))$ then the dimension of the induced
homology of $G_{t, \e}$ must be small. 
Conversely, if we compute, using Lemma~\ref{lemma:dimH}, the expected
dimension of $H_1(G_{t, \e})$ and find it to be large, we know that
$p$ must be above the critical probability $p_c$. 
These considerations lead us to introduce the following quantity.

\begin{defi}
The rank difference function associated with the family of graphs $(G_t)_t$ is defined to be
$$
D(p) = \limsup_t \Esp_p \left( \frac{\rank G^*_{t, \bar \e} - \rank G_{t, \e}}{|E_t|} \right).
$$
\end{defi}

The rank difference function satisfies the folowing equation when $p$
is below the critical probability of $G(m)$.
\begin{theo}
\label{theo:Dequation}
If $p<p_c(G(m))$ then the rank difference function associated with the family $(G_t)_t$ satifies
$$
p - \frac 2 m + D(p) = 0.
$$
\end{theo}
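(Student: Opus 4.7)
The idea is to extract the quantity $p - \tfrac{2}{m} + D(p)$ from the identity of Lemma~\ref{lemma:dimH}, and then show that it vanishes in the regime $p < p_c(G(m))$. Dividing the identity of Lemma~\ref{lemma:dimH} by $|E_t|$ and taking expectation (using $\Esp_p|\e| = p|E_t|$) yields
\begin{equation*}
\Esp_p\!\left(\frac{\dim H_1(G_{t,\e})}{|E_t|}\right) = p - \frac{2}{m} + \frac{1}{|E_t|} + \Esp_p\!\left(\frac{\rank G^*_{t,\bar\e} - \rank G_{t,\e}}{|E_t|}\right).
\end{equation*}
Since $|E_t|\to\infty$ by Theorem~\ref{theo:siran}, taking $\limsup_t$ of both sides gives $\limsup_t \Esp_p(\dim H_1(G_{t,\e})/|E_t|) = p - \tfrac{2}{m} + D(p)$. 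As $\dim H_1 \geq 0$, it suffices to show that this limsup is $\leq 0$ whenever $p < p_c(G(m))$.

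Next I would bound $\dim H_1(G_{t,\e})$ from above in terms of "large clusters". Writing $\e = \sqcup_i \e_i$ for the partition into edge sets of the connected components of $G_{t,\e}$, Lemma~\ref{lemma:H_decomposition} gives $\dim H_1(G_{t,\e}) \leq \sum_i \dim H_1(G_{\e_i})$, and Lemma~\ref{lemma:smallclusters} kills all contributions with $|\e_i| \leq t$. Using the trivial bound $\dim H_1(G_{\e_i}) \leq |\e_i|$ for the rest, and denoting by $C(e)$ the edge set of the connected component of $e$ in $G_{t,\e}$, I obtain
\begin{equation*}
\dim H_1(G_{t,\e}) \leq \sum_{i:\,|\e_i|>t} |\e_i| = \sum_{e \in E_t} \mathbf{1}\{e \in \e,\ |C(e)|>t\}.
\end{equation*}
Taking expectation and invoking the edge-transitivity of $G_t$ (a consequence of Theorem~\ref{theo:siran}) gives, for any fixed edge $e_0$ of $G_t$,
\begin{equation*}
\Esp_p\!\left(\frac{\dim H_1(G_{t,\e})}{|E_t|}\right) \leq \Prob_p\bigl(e_0 \in \e,\ |C(e_0)| > t\bigr).
\end{equation*}

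The last step is to transfer this probability to the infinite graph $G(m)$. The event $\{|C(e_0)|>t\}$ is measurable with respect to the edges contained in a ball of radius $O(t)$ around $e_0$, since any connected set of more than $t$ open edges containing $e_0$ admits a spanning tree of depth $O(t)$ rooted at $e_0$. By Theorem~\ref{theo:siran}, such a ball in $G_t(m)$ is isomorphic to the corresponding ball in $G(m)$, and coupling the Bernoulli$(p)$ edge states through this isomorphism yields
\begin{equation*}
\Prob_p^{G_t}\bigl(e_0 \in \e,\ |C(e_0)|>t\bigr) = \Prob_p^{G(m)}\bigl(e_0 \in \e,\ |C_\infty(e_0)|>t\bigr),
\end{equation*}
where $C_\infty(e_0)$ is the open cluster of $e_0$ in $G(m)$. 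For $p < p_c(G(m))$, the cluster $C_\infty(e_0)$ is almost surely finite, so the right-hand side tends to $0$ as $t\to\infty$. Chaining the inequalities yields $\limsup_t \Esp_p(\dim H_1(G_{t,\e})/|E_t|) = 0$, i.e.\ $p - \tfrac{2}{m} + D(p) = 0$.

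The main obstacle I anticipate is the locality/coupling step in the last paragraph: one must pin down explicitly the radius $R(t)=O(t)$ needed to witness $\{|C(e_0)|>t\}$ and verify that this radius fits inside the balls for which Theorem~\ref{theo:siran} provides the local isomorphism (possibly replacing $t$ by $t/2$ or shifting the indexing of $G_t$, which does not affect the $\limsup$). The remaining ingredients are already packaged in Lemmas~\ref{lemma:dimH}, \ref{lemma:H_decomposition} and~\ref{lemma:smallclusters}, and the conversion between expected cluster sums and single-edge probabilities is standard once edge-transitivity is invoked.
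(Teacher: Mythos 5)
Your proposal is correct and follows essentially the same route as the paper: bound $\dim H_1(G_{t,\e})$ via the component decomposition (Lemma~\ref{lemma:H_decomposition}) and the triviality of small clusters (Lemma~\ref{lemma:smallclusters}), convert the resulting edge count into a single-edge cluster-size probability by edge-transitivity, transfer that probability to $G(m)$ through the local isomorphism of Theorem~\ref{theo:siran} (this is exactly the paper's Lemma~\ref{lemma:leaving_components}), and combine with the expectation of Lemma~\ref{lemma:dimH}. The only cosmetic difference is order of presentation, and your worry about the radius needed for the coupling is precisely why the paper states the event as $|C(e_t)|>t-2$ rather than $>t$, which, as you note, does not affect the limit.
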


\begin{coro}\label{cor:phom}
  Defining $p_h = \sup\{p,p - \frac 2 m + D(p) = 0\}$  we have
  $p_c\leq p_h$.
\end{coro}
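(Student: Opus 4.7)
The plan is to derive this corollary as an immediate logical consequence of Theorem~\ref{theo:Dequation}. First I would introduce the set $S = \{p \in [0,1] : p - \tfrac{2}{m} + D(p) = 0\}$ so that, by definition, $p_h = \sup S$. Theorem~\ref{theo:Dequation} asserts precisely that every $p$ with $0 \le p < p_c(G(m))$ satisfies the equation defining $S$, i.e.\ $[0,p_c(G(m))) \subseteq S$. Taking suprema on both sides then yields $p_c(G(m)) = \sup [0,p_c(G(m))) \le \sup S = p_h$, which is the claim.

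There is no substantive obstacle beyond Theorem~\ref{theo:Dequation} itself: the argument is purely a manipulation of the definition of $p_h$ and a passage to the supremum. The one routine point worth recording is that the supremum defining $p_h$ is taken over a non-empty set. This is already ensured by the tree lower bound $p_c(G(m)) \ge 1/(m-1) > 0$ mentioned in the introduction, which combined with Theorem~\ref{theo:Dequation} guarantees that the whole interval $[0, 1/(m-1))$ lies in $S$. If desired, one can also verify $0 \in S$ directly: at $p=0$ the random subset $\e$ is empty almost surely, so $\rank G_{t,\e} = 0$, while $\bar\e = E_t$ and the connectedness of $G_t^*$ gives $\rank G_{t,\bar\e}^* = |F_t|-1$; using $|F_t| = (2/m)|E_t|$ (each face has length $m$, each edge lies in two faces) one finds $D(0) = 2/m$, so that $0 - 2/m + D(0) = 0$. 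Either route confirms that $p_h$ is well-defined, and the bound $p_c \le p_h$ follows at once.
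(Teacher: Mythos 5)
Your proof is correct and matches the paper's (implicit) argument exactly: the paper states the corollary without further proof precisely because, as you observe, Theorem~\ref{theo:Dequation} places every $p<p_c(G(m))$ in the set whose supremum defines $p_h$, and taking suprema gives $p_c\leq p_h$. Your additional check that the set is non-empty (via $p_c\geq 1/(m-1)>0$, or the direct computation $D(0)=2/m$) is a harmless and correct bit of extra care.
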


Assume that $p<p_c(G(m))$. 
By definition of the critical probability, 
for any fixed edge $e$ of the infinite graph $G(m)$, the probability
that $e$ is contained in an open connected component $C(e)$ of $G(m)$ of size
strictly larger than $t$ 
vanishes when $t \rightarrow \infty$. The following lemma shows that
we observe a similar behaviour in the finite graphs $G_t$. It will be
instrumental in proving Theorem~\ref{theo:Dequation}.
\begin{lemma}
\label{lemma:leaving_components}
For every $t \geq 0$, fix an edge $e_t$ of the graph $G_t$ and denote
by $C(e_t)$ its (possibly empty) connected component in the random
subgraph $G_{t, \e}$. 
Then, the probability that $C(e_t)$ contains strictly more than $t-2$
edges tends to $0$ when $t$ goes to infinity.
\end{lemma}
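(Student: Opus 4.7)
The plan is to couple the percolation on the finite graph $G_t$ with standard Bernoulli percolation on the infinite tiling $G(m)$ at the same parameter $p$, and to transfer the event $\{|E(C(e_t))| > t-2\}$ into a large-cluster event on $G(m)$, whose probability vanishes because $p < p_c(G(m))$. Fix an endpoint $v$ of $e_t$, and let $v' \in G(m)$ be its image under the isomorphism $\phi : B_t(v) \to B_t(v')$ provided by Theorem~\ref{theo:siran} between balls of radius $t$ in $G_t$ and $G(m)$. I couple the two percolations by giving every edge of $B_{t-1}(v)$ the same open/closed status as its image under $\phi$, and drawing the remaining edges of $G(m)$ as independent Bernoulli$(p)$; the marginal on $G(m)$ is then a standard percolation of parameter $p$.

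The geometric key is the elementary observation that a connected subgraph $H$ of $G_t$ containing $v$ with $k$ edges fits inside the ball $B_k(v)$, since every vertex of $H$ is at $H$-distance, hence at $G_t$-distance, at most $k$ from $v$. So whenever $|E(C(e_t))| \geq t-1$, greedy edge-growth starting from $e_t$ produces a connected subgraph $H \subseteq C(e_t)$ with $e_t \in H$, $|E(H)| = t-1$, and $H \subseteq B_{t-1}(v)$. All edges of $H$ are open, so under the coupling $\phi(H)$ is a connected subgraph of $G(m)$ containing $e' := \phi(e_t)$, of size $t-1$, and consisting entirely of open edges. Hence the open cluster $C(e')$ of $e'$ in $G(m)$ contains $\phi(H)$, which yields $|E(C(e'))| \geq t-1$ and therefore
$$
\Prob\bigl(|E(C(e_t))| > t-2\bigr) \;\leq\; \Prob\bigl(|E(C(e'))| \geq t-1\bigr).
$$

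Since $p < p_c(G(m))$, the open cluster $C(e')$ is almost surely finite, so continuity of probability from above (applied to the decreasing sequence $\{|E(C(e'))| \geq n\}$) gives $\Prob(|E(C(e'))| \geq t-1) \to 0$ as $t \to \infty$, which concludes the argument. The only non-routine step is the geometric confinement of the witnessing subgraph inside $B_{t-1}(v)$, which is exactly what makes the local isomorphism $\phi$ applicable; once this is in place the rest is a direct use of Theorem~\ref{theo:siran} together with the standard subcriticality statement for $G(m)$.
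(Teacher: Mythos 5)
Your proof is correct and follows essentially the same route as the paper: both arguments observe that the event $\{|E(C(e_t))|>t-2\}$ is determined by edges inside the ball of radius $t$ around an endpoint of $e_t$, transfer it to $G(m)$ via the local isomorphism of Theorem~\ref{theo:siran}, and conclude from the almost sure finiteness of open clusters below $p_c(G(m))$. Your explicit coupling and the confinement of the witnessing $(t-1)$-edge subgraph inside $B_{t-1}(v)$ merely make precise the paper's one-line locality claim, yielding a one-sided inequality where the paper asserts equality of probabilities.
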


\begin{proof}
The complementary event depends only on what occurs inside the ball of
radius $t$ centered on an endpoint of the edge $e_t$. Since this ball
is isomorphic to the ball with the same radius in $G(m)$, this event
has the same probability in the space $G(m)$ and in $G_t(m)$. Hence
the result by the remark preceding the lemma.
\end{proof}

\begin{proofof}{Theorem}{\ref{theo:Dequation}}

Thanks to Lemma~\ref{lemma:H_decomposition}, we have the following upper bound on the dimension of the first homology group of $G_{t, \e}$:
$$
\dim H_1(G_{t, \e}) \leq  \sum_{i=1}^r \dim H_1(G_{t, \e_i}).
$$
where $\e_i$ is the edge set of the $i$-th connected component of $G_{t, \e}$.

From Lemma~\ref{lemma:smallclusters}, all the components $\e_i$ of size smaller than $t$ have a trivial contribution to $H_1(G_{t, \e})$. For the other components, the dimension of $H_1(G_{t, \e_i})$ is bounded by the number of edges in the component $\e_i$. Indeed, the induced homology group of $G_{t, \e_i}$ is a quotient of the cycle code of this graph, whose dimension is at most the number of egdes in $\e_i$.
This implies
$$
\dim H_1(G_{t, \e}) \leq |\{ e \in E_t \ \text{such that} \ |C(e)| > t  \}|,
$$
where $C(e)$ denotes the connected component in $G_{t, \e}$ of the
edge $e$ and $|C(e)|$ is its number of edges. 

Let us denote by $X_t=X_t(G_{t, \e})$ the cardinality of the set 
$\{ e \in E_t \ \text{such that} \ |C(e)| > t  \}$. 
To study the expectation of $X_t$, we define a random variable $X_e$, associated with each edge $e \in E_t$, which takes the value $X_e(G_{t, \e}) = 1$ if the size of $C(e)$ is larger than $t$ and which is 0 otherwise. Consequently, we have
$$
X_t = \sum_{e \in E_t} X_e,
$$
and by linearity of expectation, $\Esp(X_t) = \sum_e \Esp(X_e)$. For
every edge $e \in E_t$, this expectation of the random variable $X_e$
is $\Esp(X_e) = \Prob(|C(e)| > t)$. 
By edge-transitivity of the graph $G_t$, this quantity does not depend
on the edge $e$, thus $\Esp(X_t) = |E_t| \  \Prob(|C(e_t)| > t)$, for
some fixed edge $e_t$ of the graph $G_t$. Moreover, from
Lemma~\ref{lemma:leaving_components}, this probability vanishes 
when $t$ goes to infinity. 
This allows us to bound the expected dimension of the induced homology:
$$
\Esp_p \left( \frac{\dim H_1(G_{t, \e})}{|E_t|} \right) \leq \Esp_p \left( \frac{X_t}{|E_t|} \right) = \Prob_p( |C(e_t)| > t) \rightarrow 0.
$$
Since the right-hand side tends to 0 when $t$ goes to infinity, taking the superior limit gives exactly 0, \emph{i.e.} 
$$
\limsup_t \Esp_p \left( \frac{\dim H_1(G_{t, \e})}{|E_t|} \right) = 0.
$$

To conclude the proof, we determine the expected dimension of the induced homology group with the help of Lemma~\ref{lemma:dimH}. We find
$$
\limsup_t \Esp_p \left( \frac{\dim H_1(G_{t, \e})}{|E_t|} \right) = p - \frac 2 m + D(p).
$$
\end{proofof}

\section{Computation of the rank difference function of hyperbolic tilings}\label{section:D(p)}

The behaviour of the function $D(p)$ is difficult to capture directly
from its definition. The aim of this section is to provide an explicit
combinatorial description of the rank difference function $D(p)$
associated with the finite tilings $(G_t)_t$.

The next lemma enables us to replace the rank which appears in the
definition of $D(p)$ by a strictly graph-theoretical quantity.
\begin{lemma} \label{lemma:kappa}
Let $\kappa_{t, \e}$ denote the number of connected components of the graph $G_{t, \e}$. We have:
$$
\rank G_{t, \e} = |V_t| - \kappa_{t, \e}.
$$
\end{lemma}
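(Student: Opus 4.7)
The plan is to derive this as an immediate consequence of the rank–nullity theorem combined with Lemma~\ref{lemma:Dcycle}, which has already been recorded. The key observation is that $\rank G_{t,\e}$ is by definition the rank of the incidence matrix of the graph $(V_t,\e)$, and that this matrix is precisely the matrix of the boundary map $\partial_1^{\e} \colon C_1^{\e} \to C_0^{\e}$ in the singleton bases introduced in Section~\ref{section:homology}. Hence $\rank G_{t,\e}$ equals the dimension of the image of $\partial_1^{\e}$.

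First I would invoke the rank–nullity identity
\[
\rank G_{t,\e} \;=\; \dim \im \partial_1^{\e} \;=\; |\e| \;-\; \dim \ker \partial_1^{\e}.
\]
Next I would identify $\ker \partial_1^{\e}$ with the cycle code of the graph $G_{t,\e}=(V_t,\e)$, which is exactly the statement that a $\F_2$-combination of edges lies in $\ker \partial_1^{\e}$ if and only if every vertex is incident to an even number of selected edges. Then Lemma~\ref{lemma:Dcycle} applied to the graph $G_{t,\e}$ (which has vertex set $V_t$, edge set $\e$, and $\kappa_{t,\e}$ connected components) yields
\[
\dim \ker \partial_1^{\e} \;=\; |\e| \;-\; |V_t| \;+\; \kappa_{t,\e}.
\]

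Substituting this into the rank–nullity identity gives
\[
\rank G_{t,\e} \;=\; |\e| - \bigl(|\e| - |V_t| + \kappa_{t,\e}\bigr) \;=\; |V_t| - \kappa_{t,\e},
\]
which is the claimed formula. There is really no obstacle here: the statement is a classical fact about $\F_2$-incidence matrices of graphs, and all the ingredients have been set up in Sections~\ref{section:homology} and in Lemma~\ref{lemma:Dcycle}. The only minor point to be careful about is to note that isolated vertices of $(V_t,\e)$ (those not incident to any edge of $\e$) are counted as connected components in $\kappa_{t,\e}$, which is consistent with both the statement of Lemma~\ref{lemma:Dcycle} and the convention that $G_{t,\e}$ has the full vertex set $V_t$.
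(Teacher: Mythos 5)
Your proof is correct and follows exactly the paper's own argument: identify $\rank G_{t,\e}$ with the rank of the incidence matrix, note its kernel is the cycle code, apply Lemma~\ref{lemma:Dcycle}, and conclude by rank--nullity. The remark about isolated vertices is a sensible extra precaution but not a departure from the paper's route.
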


\begin{proof}
By definition, the rank of the graph $G_{t, \e}$ is the rank of an incidence matrix of this graph. The kernel of this incidence matrix is the cycle code of the graph $G_{t, \e}$, which has dimension $|\e| - |V_t| + \kappa_{t, \e}$ from Lemma~\ref{lemma:Dcycle}. The result follows.
\end{proof}

The function $D(p)$ depends on the expected rank of the random submatrix  $G_{t, \e}$. This encourages us to examine the expected number of connected components of the random subgraph $G_{t, \e}$. A key ingredient of our study is the following decomposition of the random variable $\kappa_{t, \e}$.
\begin{lemma} \label{lemma:kappa_decomposition}
Let $C$ be a connected subgraph of $G_t$.  Denote by $X_C$ the random variable which takes the value 1 if $C$ is a connected component of the random graph $G_{t, \e}$ and 0 otherwise.
Then, we have
$$
\kappa_{t, \e} = \sum_{C \in \mc C_t} X_C
$$
where $\mc C_t$ denotes the set of connected subgraphs $C$ of $G_t(m)$.

Moreover, we have $\Esp_p(X_C) = p^{|E(C)|}(1-p)^{|\partial(C)|}$
where $\partial(C)$ is the set of edges of $G_t$ which are incident to at least one vertex of $C$, but which do not belong to $E(C)$.
\end{lemma}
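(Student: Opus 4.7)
The plan is to establish the two claims in turn, both of which are essentially bookkeeping once the definitions are unpacked. For the decomposition $\kappa_{t, \e} = \sum_{C \in \mc C_t} X_C$, I would fix an arbitrary realisation of $\e$ and note that the connected components of the random graph $G_{t, \e}$ are themselves a finite collection of connected subgraphs of $G_t$, each of which lies in $\mc C_t$. Summing $X_C$ over all $C \in \mc C_t$ therefore contributes $1$ for each genuine component of $G_{t, \e}$ and $0$ for every other connected subgraph, which gives the identity. This holds pointwise in $\e$, hence as an equality of random variables.

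For the expectation computation, I would translate the event $\{X_C = 1\}$ into an explicit list of conditions on the edges of $G_t$. A fixed connected subgraph $C$ with vertex set $V(C)$ and edge set $E(C)$ equals a connected component of $G_{t, \e}$ if and only if (i) every edge of $E(C)$ lies in $\e$, which together with the assumed connectedness of $C$ guarantees that all vertices of $V(C)$ belong to a single component; and (ii) every edge of $\partial(C)$ is absent from $\e$, which ensures both that no vertex outside $V(C)$ gets attached to that component and that no extra edge appears inside it. Edges of $E_t$ that are not incident to $V(C)$ play no role. Because the $|E(C)|$ edges forced to be open and the $|\partial(C)|$ edges forced to be closed are disjoint sets whose states are independent Bernoulli trials of parameters $p$ and $1-p$, the formula $\Esp_p(X_C) = p^{|E(C)|}(1-p)^{|\partial(C)|}$ follows immediately.

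The only subtle point — and arguably the one place where the statement could go wrong if read too quickly — is that $\partial(C)$ as defined in the lemma includes not only the ``external'' edges with exactly one endpoint in $V(C)$ but also any ``chord'' edges of $G_t$ with both endpoints in $V(C)$ that are not in $E(C)$. I would make this explicit, because an open chord would leave $V(C)$ unchanged as a vertex set but enlarge the edge set of the true component beyond $E(C)$, so $C$ would fail to equal that component as a subgraph. Once this observation is recorded the proof is purely combinatorial and requires no input from the hyperbolic structure of $G_t$ or from the homology machinery of Section~\ref{section:homology}.
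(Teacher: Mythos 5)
Your proof is correct; the paper itself offers no argument, stating only that ``the proof of the above lemma is self-evident,'' and what you have written is exactly the evident argument spelled out: the pointwise counting identity for $\kappa_{t,\e}$, and the independence of the disjoint edge sets $E(C)$ and $\partial(C)$ for the expectation. Your explicit remark that $\partial(C)$ must include chord edges (edges of $G_t$ with both endpoints in $V(C)$ but not in $E(C)$), since an open chord would enlarge the component beyond $C$, is a worthwhile clarification that the paper leaves implicit.
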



The proof of the above lemma is self-evident.
Using this decomposition of $\kappa_{t, \e}$, we derive the following exact expression of the rank difference function as a function of the subgraphs of the infinite graph $G(m)$.

\begin{theo} \label{theo:D_graphical}
For $m\geq 5$ and $0<p\leq 1/2$,
The rank difference function associated with the graphs $(G_t(m))_t$ is equal to
$$
D(p) = \frac 2 m \sum_{ C \in \mc C(v) } 
\left( \frac 1 {|V(C)|} \left(p^{|E(C)|} (1-p)^{|\partial(C)|} - (1-p)^{|E(C)|} p^{|\partial(C)|} \right) \right),
$$
where $\mc C(v)$ denotes the set of connected subgraphs $C$ of $G(m)$
containing a fixed vertex $v$.
\end{theo}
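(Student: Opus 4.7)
The plan is to express the expected component counts $\Esp(\kappa_{t,\e})$ and $\Esp(\kappa^*_{t,\bar\e})$ as vertex-localised sums and then pass to the limit using Theorem~\ref{theo:siran}. First I would apply Lemma~\ref{lemma:kappa} to both $G_{t,\e}$ and $G^*_{t,\bar\e}$: since $G_t$ is $m$-regular with $m$-gonal faces, $|V_t|=|V^*_t|=2|E_t|/m$, so the rank difference collapses to
$$\rank G^*_{t,\bar\e} - \rank G_{t,\e} = \kappa_{t,\e} - \kappa^*_{t,\bar\e}.$$

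Next, Lemma~\ref{lemma:kappa_decomposition} gives $\Esp(\kappa_{t,\e}) = \sum_C p^{|E(C)|}(1-p)^{|\partial(C)|}$, summed over the connected subgraphs of $G_t$. The key rewriting is to fix a base vertex: since each $C$ is counted $|V(C)|$ times when we sum over pairs $(v,C)$ with $v\in V(C)$,
$$\sum_C f(C) = \sum_{v\in V_t}\sum_{C\ni v}\frac{f(C)}{|V(C)|},$$
and vertex-transitivity of $G_t$ collapses the outer sum to a factor $|V_t|$, yielding $\Esp(\kappa_{t,\e}) = |V_t|\,S_t$ with
$$S_t := \sum_{C\ni v_t}\frac{p^{|E(C)|}(1-p)^{|\partial(C)|}}{|V(C)|}$$
for any fixed $v_t\in V_t$. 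Self-duality of $G_t$ (so $G_t^*\cong G_t$) yields $\Esp(\kappa^*_{t,\bar\e}) = |V_t|\,\tilde S_t$ where $\tilde S_t$ is the analogous sum in which $p$ and $1-p$ are interchanged. Dividing by $|E_t|$ produces the prefactor $|V_t|/|E_t|=2/m$ appearing in the target formula.

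The remaining step is to pass to the limit $t \to \infty$. The natural interpretation is $S_t = \Esp_p(1/|V(C_t(v_t))|)$, where $C_t(v_t)$ denotes the connected component of $v_t$ in $G_{t,\e}$: each summand $p^{|E(C)|}(1-p)^{|\partial(C)|}$ is exactly $\Prob(C_t(v_t)=C)$. For each fixed finite $C$ containing a vertex $v$ in $G(m)$, this event is determined by edges inside a ball of radius at most $|E(C)|+1$ around $v_t$, which by Theorem~\ref{theo:siran} is isomorphic to the corresponding ball of $G(m)$ once $t$ is large, so its probability stabilises at $p^{|E(C)|}(1-p)^{|\partial(C)|}$. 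I would then finish with an $\varepsilon$-splitting: the head of $S_t$ over subgraphs with $|V(C)|\leq N$ converges termwise, while the tail is uniformly bounded by $1/N$ because $\sum_{C\ni v_t}p^{|E(C)|}(1-p)^{|\partial(C)|}=1$ (the events partition the sample space) and $1/|V(C)|\leq 1/N$ on the tail. Self-duality of the infinite graph $G(m)$ transports the identical argument to $\tilde S_t$, producing the $(1-p)^{|E(C)|}p^{|\partial(C)|}$ contribution.

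The only genuine obstacle is this exchange of limit and infinite sum: the limiting sum over $\mc C(v)$ in $G(m)$ has infinitely many terms, whereas for each finite $t$ only finitely many subgraphs contribute, with no monotone inclusion linking the families. The uniform tail bound coming from the probabilistic interpretation of $S_t$ is exactly what resolves this, and as a pleasant byproduct the $\limsup$ in the definition of $D(p)$ is actually a genuine limit.
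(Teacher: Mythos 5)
Your argument is correct, and it shares the paper's skeleton --- Lemma~\ref{lemma:kappa}, the decomposition of Lemma~\ref{lemma:kappa_decomposition}, recentering at a fixed vertex via vertex-transitivity, and the ball isomorphism of Theorem~\ref{theo:siran} --- but it resolves the one delicate point, the exchange of $\lim_t$ with the infinite sum, by a genuinely different device. The paper truncates the sum over $\mc C_t$ at $|E(C)|<M_t$ \emph{before} recentering and bounds the discarded part by $2/M_t$, using the fact that $G_{t,\e}$ has at most $|E_t|/M_t$ components with at least $M_t$ edges; it then still needs Lemma~\ref{lemma:series_positivity} (the isoperimetric inequality) to argue that the terms of the resulting series are positive, so that the $\limsup$ of the truncated sums equals the full series. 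You recenter first and read $S_t$ as $\Esp_p\bigl(1/|V(C_t(v_t))|\bigr)$, so that the identity $\sum_{C\ni v_t}\Esp_p(X_C)=1$ (the events $\{C_t(v_t)=C\}$ partition the sample space in the finite graph) gives a tail bound of $1/N$ on the terms with $|V(C)|>N$, uniformly in $t$; this lets you pass to the limit in $S_t$ and $\tilde S_t$ separately and dispenses with the positivity lemma entirely for this theorem (it is still needed later for Theorem~\ref{theo:bound}). Your route also establishes directly that each of the two series converges and, as you note, that the $\limsup$ in the definition of $D(p)$ is a genuine limit. The only point worth making explicit is that the head of the sum over $|V(C)|\leq N$ is a finite sum that literally stabilizes once $t\geq N+1$, since such a $C$ together with $\partial(C)$ lies in the ball of radius $N+1$ around $v_t$; you phrase the locality in terms of $|E(C)|$, which is equivalent.
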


\begin{proof}
From Lemma~\ref{lemma:kappa}, the rank difference function can be rewritten
\begin{align*}
D(p) & = \limsup_t \Esp_p\left( \frac{\kappa_{t, \e} - \kappa_{t, \bar \e}} {|E_t|} \right)\\
&= \limsup_t \left( \Esp_p \left( \frac{\kappa_{t, \e} } {|E_t|} \right) - \Esp_{1-p} \left( \frac{\kappa_{t, \e} } {|E_t|} \right) \right).
\end{align*}
where we used the fact that, $\bar \e$ being the complement of $\e$ in
$E_t$, we have $\Esp_p(\kappa_{t, \bar \e}) = \Esp_{1-p}(\kappa_{t, \e})$.

Then, using the decomposition of $\kappa_{t, \e}$ proposed in Lemma~\ref{lemma:kappa_decomposition} and the linearity of expectation, we obtain
$$
D(p) = \limsup_t \frac 1 {|E_t|} \sum_{ C \in \mc C_t } \left( \Esp_p(X_C) - \Esp_{1-p}(X_C)\right).
$$\medskip

{\em Elimination of the large components---}
Now, remark that the main contribution in this sum is given by the small components. To prove this, consider a sequence of integers $(M_t)_t$ such that $M_t \rightarrow +\infty$. Then, we have
\begin{align*}
\frac 1 {|E_t|} \sum_{\substack{C \in \mc C_t \\ |E(C)| \geq M_t}} \left( \Esp_p(X_C) - \Esp_{1-p}(X_C)\right)
& \leq \frac 1 {|E_t|} \sum_{\substack{C \in \mc C_t \\ |E(C)| \geq M_t}} \left( \Esp_p(X_C) + \Esp_{1-p}(X_C)\right)\\
& = \frac 1 {|E_t|} 
\Esp_p \left( \sum_{\substack{C \in \mc C_t \\ |E(C)| \geq M_t}} X_C \right) +
\frac 1 {|E_t|} 
 \Esp_{1-p} \left( \sum_{\substack{C \in \mc C_t \\ |E(C)| \geq M_t}} X_C\right)\\
& \leq \frac 1 {|E_t|} \frac {2 |E_t|}{M_t} = \frac {2}{M_t} \rightarrow 0
\end{align*}
To obtain the last inequality, remark that the sum of all the random
variables $X_C$ such that $|E(C)| \geq M_t$ counts the number of
connected components of the subgraph $G_{t, \e}$ of size larger than
$M_t$. Since connected components are disjoint, this number cannot be larger than $|E_t|/M_t$.

The previous paragraph proves that, for every sequence $M_t$ going to infinity, the rank difference function is given by
$$
D(p) = \limsup_t \frac 1 {|E_t|} \sum_{\substack{C \in \mc C_t \\ |E(C)| < M_t}} \left( \Esp_p(X_C) - \Esp_{1-p}(X_C)\right)
$$\medskip

{\em Recentralization---}
In order to remove the dependency on $t$, we would like to apply the local isomorphism between $G_t(m)$ and $G(m)$ and to express everything as a function of the infinite graph $G(m)$. 
First, we have to recenter all the components $C$ around a fixed vertex $v_t$ of the graph $G_t$.
To move a connected component $C$ of the graph $G_t$ onto a component which contains the vertex $v=v_t$, we use a family of automorphisms of the graph $G_t(m)$. For every vertex $w$ of the graph $G_t(m)$, select $\sigma_{v, w}$, an automorphism of the graph $G_t(m)$ sending $v$ onto $w$. We take the identity for $\sigma_{v, v}$. Such an automorphism exists because the graph $G_t$ is vertex transitive, as explained in Section~\ref{section:siran_graphs}.
From this fixed family of automorphisms, we can reach all the connected subgraphs of $G_t$, starting from the subgraphs containing $v$. Stated differently, we have
$$
\mc C_t =\{ C \ | \ C \text{ connected } \} = \bigcup_{w \in V_t} \{ \sigma_{v, w}(C) \ | \   C \text{ connected }, v \in V(C) \}
$$
At the right-hand side of this equality, each component $C$ of the graph appears $|V(C)|$ times.
Moreover, the contribution $\Esp_p(X_C)$ of the subgraph $C$, 
computed in Lemma~\ref{lemma:kappa_decomposition}, 
depends only on $|E(C)|$ and $|\partial(C)|$, which are both invariant under the application of an automorphism~$\sigma_{v, w}$. Hence, $D(p)$ is equal to
\begin{align*}
D(p) 
& = \limsup_t \frac 1 {|E_t|} \sum_{\substack{C \in \mc C_t \\ |E(C)| < M_t}} \left( \Esp_p(X_C) - \Esp_{1-p}(X_C)\right)\\
& = \limsup_t \frac 1 {|E_t|} \sum_{\substack{C \in \mc C_t(v) \\ |E(C)| < M_t}} \sum_{w \in V_t} \frac{1}{|V(C)|} \left( \Esp_p(X_{\sigma_{v,w}(C)}) - \Esp_{1-p}(X_{\sigma_{v,w}(C)})\right)\\
& = \limsup_t \frac 1 {|E_t|} \sum_{\substack{C \in \mc C_t(v) \\
    |E(C)| < M_t}} \frac {|V_t|} {|V(C)|}  \left( \Esp_p(X_C) -
  \Esp_{1-p}(X_C)\right)\\
& = \limsup_t \frac 2m \sum_{\substack{C \in \mc C_t(v) \\
    |E(C)| < M_t}} \frac {1} {|V(C)|}  \left( \Esp_p(X_C) -
  \Esp_{1-p}(X_C)\right)
\end{align*}
where we have used $\frac{|V_t|}{|E_t|}=\frac 2m$ since $G_t$ is $m$-regular. \medskip

{\em Application of the local isomorphism---}
We now replace the graph $G_t(m)$ by the infinite graph $G(m)$.
Since the balls of radius $t$ are isomorphic in $G_t(m)$ and in
$G(m)$, we have that every fixed subgraph $C$ inside such a ball has the same probability
of being a connected component whether it is of the random subgraph $G_{t,\e}$ or of
the open subgraph of $G(m)$.
By choosing $M_t=t-1$, we therefore get
\begin{equation}
  \label{eq:limsup}
 D(p) = \limsup_t \frac 2 m \sum_{\substack{C \in \mc C(v) \\ |E(C)| < M_t}} \frac {1} {|V(C)|}  \left( \Esp_p(X_C) - \Esp_{1-p}(X_C)\right)
\end{equation}
where $\mc C(v)$ denotes the set of connected subgraphs $C$ of $G(m)$ 
containing the fixed vertex $v$. \medskip

We can now conclude the proof. From
Lemma~\ref{lemma:kappa_decomposition}, the quantity $\left(
  \Esp_p(X_C) - \Esp_{1-p}(X_C)\right)$ is equal to $\left(p^{|E(C)|}
  (1-p)^{|\partial(C)|} - (1-p)^{|E(C)|} p^{|\partial(C)|} \right)$, which
is positive by Lemma~\ref{lemma:series_positivity} to be proven just
below. 
Therefore all the terms of the sum in \eqref{eq:limsup} are positive, which means that the $\limsup$ is in fact a limit. Since $M_t \rightarrow +\infty$, we get
$$
D(p) = \frac 2 m \sum_{ C \in \mc C(v) } 
\left( \frac 1 {|V(C)|} \left(p^{|E(C)|} (1-p)^{|\partial(C)|} - (1-p)^{|E(C)|} p^{|\partial(C)|} \right) \right).
$$
\end{proof}

It remains to prove that the series has positive terms. 
This result relies on an isoperimetric inequality.
\begin{lemma} \label{lemma:series_positivity}
Let $0<p<1/2$. For every connected subgraph $C$ of $G(m)$, we have 
$$
 p^{|E(C)|}(1-p)^{|\partial(C)|} - (1-p)^{|E(C)|} p^{|\partial(C)|} > 0.
$$
\end{lemma}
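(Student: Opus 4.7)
The plan is to reduce the desired inequality to a combinatorial isoperimetric inequality of the form $|E(C)| < |\partial(C)|$, and then to establish that inequality using Euler's formula together with the fact that the girth of $G(m)$ is $m$.

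First, since $0 < p < 1/2$ we have $p/(1-p) < 1$, and dividing both sides of the inequality by $(1-p)^{|E(C)| + |\partial(C)|}$ shows that it is equivalent to $(p/(1-p))^{|E(C)|} > (p/(1-p))^{|\partial(C)|}$. As the base lies in $(0,1)$, this amounts to $|E(C)| < |\partial(C)|$. Set $v = |V(C)|$, $a = |E(C)|$, and let $e''$ denote the number of edges of $G(m)$ with both endpoints in $V(C)$; note that $a \leq e''$. Partitioning $\partial(C)$ into the edges with exactly one endpoint in $V(C)$ and the edges with both endpoints in $V(C)$ but not in $E(C)$, and summing degrees in the $m$-regular graph $G(m)$, one obtains $|\partial(C)| = mv - e'' - a$. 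The inequality $|\partial(C)| > a$ therefore reads $mv > e'' + 2a$, and since $a \leq e''$ it suffices to prove the isoperimetric inequality $3 e'' < mv$.

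For this, I would apply Euler's formula to the induced subgraph $H = G(m)[V(C)]$, which is a finite connected planar graph (it contains the connected subgraph $C$) inheriting its embedding from $G(m)$. Writing $f_b$ for the number of its bounded faces, Euler gives $f_b = 1 - v + e''$. Each bounded face of $H$ has boundary closed walk of length at least $m$: such a walk is non-trivial because its interior has positive area, so it decomposes into simple cycles containing at least one cycle of $G(m)$, whose length is at least the girth $m$; any bridges of $H$ traversed twice only add to the total length. Summing and using $2 e'' = \sum_{\text{faces}} (\text{length})$, with the outer face contributing nonnegatively, we obtain $2 e'' \geq m f_b = m(1 - v + e'')$, which rearranges to $(m-2) e'' \leq m(v-1)$. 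For $m \geq 5$, $m - 2 \geq 3$, whence $3 e'' \leq m(v-1) < mv$, as required. The main delicate point is the lower bound $\geq m$ on bounded face walk lengths when $H$ is not 2-connected, which is handled by the cycle-decomposition and girth argument just sketched.
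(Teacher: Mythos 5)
Your reduction to the combinatorial inequality $|E(C)|<|\partial(C)|$ is the same first step as the paper's, and your bookkeeping is correct: with $v=|V(C)|$, $a=|E(C)|$ and $e''$ the number of edges of $G(m)$ with both endpoints in $V(C)$, the identity $|\partial(C)|=mv-e''-a$ and the reduction to $3e''<mv$ are both right. From there you take a genuinely different route. The paper does not touch Euler's formula: it quotes the explicit edge-isoperimetric constant $i_E(G(m))=(m-2)\sqrt{1-4/(m-2)^2}$ of $G(m)$ from \cite{HJL02} and deduces $|\partial(C)|/|E(C)|\geq i_E/(m/2-i_E/2)\approx 1.62>1$ for $m=5$. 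Your Euler-formula argument is more elementary and self-contained in spirit, and the topological point you flag (that a bounded face of a connected plane graph that need not be $2$-connected still has a cycle in its boundary, so its boundary walk is at least as long as the girth) is handled correctly.

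There is, however, one genuine gap: the step ``each bounded face of $H=G(m)[V(C)]$ has boundary walk of length at least $m$'' rests on the assertion that the girth of $G(m)$ equals $m$, which you state but do not prove. The claim is true, but it is not free, and your inequality uses it at full strength precisely in the critical case $m=5$: girth $\geq 4$ would only give $e''\leq 2(v-1)$, hence $3e''\leq 6v-6$, which fails to beat $5v$ once $v\geq 6$. Moreover, the natural attempt to prove girth $\geq m$ --- every cycle in the planar tiling bounds a union of $k$ faces, and the outer boundary of such a union has at least $m$ edges --- is itself an isoperimetric statement about subsets of the dual tiling, which by self-duality of $G(m)$ is essentially the same kind of inequality you are trying to establish, so one must take care not to argue in a circle. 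A clean fix is to cite the girth (or the isoperimetric constant) of the hyperbolic tilings, e.g.\ from \cite{HJL02} as the paper does, or to prove it via combinatorial Gauss--Bonnet. With that ingredient supplied, your argument is complete; note that it yields only $|\partial(C)|\geq|E(C)|+1$ rather than the paper's ratio bound $\approx 1.62$, which is harmless since the lemma needs only strict inequality.
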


\begin{proof}
The parameter $p$ is assumed to be smaller than $1/2$. Thus, to prove that this quantity is strictly positive it suffices to show that for every connected subgraph $C$ of $G(m)$, we have $|E(C)| < |\partial(C)|$.
This inequality is somewhat analogous to the isoperimetric inequality that we recall now. The isoperimetric constant of the graph $G(m)$ is defined to be
$$
i_E(G(m)) = \inf \left\{ \frac{|\partial(C)|}{|V(C)|} \right\}
$$
with $C$ ranging over all finite subgraphs (that can be assumed
connected) of $G(m)$.
This number was computed exactly for hyperbolic graphs in \cite{HJL02}. It is
\begin{equation} \label{eqn:i_E}
i_E(G(m)) = (m-2) \sqrt{1-\frac{4}{(m-2)^2}}.
\end{equation}
In order to apply this to our problem, we write
\begin{align} \label{eqn:isoperimetric}
\frac{|\partial(C)|}{|E(C)|} = \frac{|\partial(C)|}{(m/2)|V(C)|-(1/2)|\partial(C)|} \geq \frac {i_E(G(m))}{m/2 - i_E(G(m))/2}
\end{align}
where we have used the fact that the smallest rate $|\partial(C)|/|E(C)|$ is achieved when $\partial(C)$ contains only edges with exactly one endpoint in $C$. In that case, we have
$m|V(C)| = 2|E(C)| + |\partial(C)|$.
Using Equation~(\ref{eqn:i_E}) and (\ref{eqn:isoperimetric}), it is then easy to check that, for all $m \geq 5$, we have
$$
\frac{|\partial(C)|}{|E(C)|} \geq \frac {i_E(G(5))}{5/2 - i_E(G(5))/2} \approx 1.62 > 1.
$$
This proves the lemma.
\end{proof}

\section{Bound on the critical probability of the hyperbolic lattice $G(m)$}\label{section:approximation}

We showed in Theorem~\ref{theo:Dequation} that the critical
probability of $G(m)$ is bounded from above as $p_c(G(m))\leq p_h$
with $p_h$ defined in
Corollary~\ref{cor:phom}. Theorem~\ref{theo:D_graphical} provides an
exact formula for the rank difference function $D(p)$ as a sum of a
series depending on the connected subgraphs of $G(m)$. This gives a
new expression for $p_h$ that does not involve the finite graphs
$G_t(m)$ anymore, but it still leaves $p_h$ difficult to compute. We
now show that by replacing the series $D(p)$ by its partial sums,
we obtain explicit upper bounds on $p_h$ and hence on $p_c$.

\begin{theo} \label{theo:bound}
Let $n \geq 0$ and let $D_n(p)$ be a partial sum of the series $D(p)$ associated with the hyperbolic graph $G(m)$. Then, the solution $p_h(n) \in [0, 1]$ of the equation
$$
p - 2/m + D_n(p)=0
$$
is an upper bound on $p_h$ and hence on $p_c(G(m))$.
\end{theo}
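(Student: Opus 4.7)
The plan is to exploit the positivity of the series defining $D(p)$, established in Lemma~\ref{lemma:series_positivity}, together with an intermediate value argument. Concretely, I will produce a zero of $p \mapsto p - 2/m + D_n(p)$ in the interval $[p_h, 1/2]$, which then sits below the largest solution $p_h(n)$.

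Write $f(p) = p - 2/m + D(p)$ and $f_n(p) = p - 2/m + D_n(p)$. Since Lemma~\ref{lemma:series_positivity} guarantees that every term of the series in \eqref{eq:D(p)} is nonnegative on $(0, 1/2)$, each partial sum satisfies $D_n(p) \leq D(p)$, and hence $f_n(p) \leq f(p)$ on this range. A short verification shows that $D$ is continuous on $(0, 1/2)$: the series consists of positive continuous terms and converges uniformly on compact subsets (by a Weierstrass M-test using the exponential upper bound on the number of connected subgraphs of $G(m)$ through a fixed vertex with a prescribed number of edges). So the supremum in the definition of $p_h$ is attained, i.e.\ $f(p_h) = 0$, and therefore $f_n(p_h) \leq 0$.

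At the other endpoint $p = 1/2$, each summand $(p^{|E(C)|}(1-p)^{|\partial(C)|} - (1-p)^{|E(C)|} p^{|\partial(C)|})/|V(C)|$ vanishes by the symmetry $p \leftrightarrow 1-p$; hence $D_n(1/2) = 0$ and $f_n(1/2) = 1/2 - 2/m > 0$, using $m \geq 5$. Since $f_n$ is a polynomial in $p$, hence continuous, the intermediate value theorem yields a zero $p^{*} \in [p_h, 1/2]$ of $f_n$. Interpreting $p_h(n)$ as $\sup\{p \in [0,1] : f_n(p) = 0\}$ by direct analogy with the definition of $p_h$, we obtain $p_h(n) \geq p^{*} \geq p_h$. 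Combined with Corollary~\ref{cor:phom}, this gives the desired chain $p_c(G(m)) \leq p_h \leq p_h(n)$.

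The main obstacle is the one genuinely analytic step: showing that the supremum in the definition of $p_h$ is attained, which requires continuity of $D$ on $(0,1/2)$. Once this is established, the remainder is the straightforward pairing of the monotone comparison $D_n \leq D$ with a two-point evaluation and an application of the intermediate value theorem; no further structural information about the graphs $G(m)$ or $G_t(m)$ is needed beyond what Theorem~\ref{theo:D_graphical} and Lemma~\ref{lemma:series_positivity} already provide.
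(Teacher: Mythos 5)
Your argument reaches the right conclusion, but by a genuinely different and longer route than the paper. The paper's proof is a two-line comparison: since every term of the series is strictly positive on $(0,1/2)$ (Lemma~\ref{lemma:series_positivity}), any solution $p^{*}$ of $p-2/m+D_n(p)=0$ satisfies $p^{*}-2/m+D(p^{*})>0$; by the contrapositive of Theorem~\ref{theo:Dequation} this forces $p^{*}\geq p_c$. No intermediate value theorem, endpoint evaluation, or continuity of $D$ is invoked. Your version instead locates a root of $f_n$ in $[p_h,1/2]$ and compares it with the largest root $p_h(n)$; this has the mild advantage of directly giving $p_h(n)\geq p_h$ (the paper's argument, read literally, gives $p_h(n)\geq p_c$), but it makes you pay for an analytic fact the paper never needs.

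That fact --- continuity of $D$ on $(0,1/2)$, which you use to guarantee $f(p_h)=0$ --- is the weak point of your write-up, for two reasons. First, the Weierstrass M-test as you describe it does not obviously close: the number of connected subgraphs of $G(m)$ through a fixed vertex with $k$ edges grows like $\lambda^{k}$ with $\lambda$ large (for $G(5)$ the data of Table~\ref{tab:components} already give $\lambda\gtrsim 6.9$ at $k=8$), while the best termwise bound obtainable from the isoperimetric inequality is roughly $\bigl(p(1-p)^{1.62}\bigr)^{k}$, whose maximum over $(0,1/2)$ is about $0.175$; the product exceeds $1$, so the proposed dominating series diverges. (The series $D(p)$ does converge, but for the probabilistic reason that $\sum_{C\ni v}\Esp_p(X_C)\leq 1$; upgrading this to uniform convergence is not immediate.) Second, and more importantly, the continuity is unnecessary: instead of evaluating $f_n$ at $p_h$ itself, take any $q$ with $f(q)=0$; then $f_n(q)\leq 0$ and $f_n(1/2)=1/2-2/m>0$ give a root of $f_n$ in $[q,1/2]$, hence $p_h(n)\geq q$, and taking the supremum over such $q$ yields $p_h(n)\geq p_h$ with no limit interchange at all. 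With that repair your proof is correct; as written, the step you yourself single out as ``the one genuinely analytic step'' is the one that does not hold up.
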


\begin{proof}
We have seen in Lemma~\ref{lemma:series_positivity} that all the terms
of the series $D(p)$ are strictly positive when $p>0$. Thus, every
partial sum $D_n(p)$ 
satisfies $D_n(p) < D(p)$. As a consequence, if $p_h(n)$ is a solution of the equation $p - 2/m + D_n(p)=0$, then we have $p_h(n) - 2/m + D(p_h(n))>0$. This proves that $D(p)$ does not satisfy the criterion of Theorem~\ref{theo:Dequation} at $p=p_h(n)$. Therefore $p_h(n)$ is an upper bound on $p_h$.
\end{proof}

As a first application of this theorem, using only the fact that $D_n(p) \geq 0$, we recover the upper bound $p_c(G(m)) \leq 2/m$, proved in \cite{DZ10}.

The first terms of the series, corresponding to the components of small size can be computed easily. For example the number of connected subgraphs of size 0, that is with 0 edges, containing a fixed vertex of $G(m)$ is 1 and this subgraph has a boundary $\partial(C)$ of size $m$.
This gives the partial sum
$$
D_0(p) = \frac 2 m ( (1-p)^m-p^m ).
$$
Applying Theorem~\ref{theo:bound} to $D_0(p)$, we get an upper close to $0.35$.
This is already more precise than the upper bound in \cite{DZ13}.

The next partial sum is given by
$$
D_1(p) = D_0(p) + \frac 2 m \left( \frac m 2 ( p(1-p)^{2(m-1)} - p^{2(m-1)}(1-p) ) \right),
$$
since there are $m$ different connected subgraphs of $G(m)$ composed
of one edge and containing a fixed vertex.

The first terms can be computed easily in this way. In a tree it is
possible to get an exact formula for the number of rooted connected
subgraphs using the Lagrange inversion threorem. However this
enumeration problem becomes extremely difficult when the subgraphs
start covering cycles. Moreover, the size of the boundary and the
number of vertices of the subgraph do not depend only on its number of
edges. We enumerated all the connected subgraphs of $G(5)$ (hyperbolic
animals, as in \cite{MW10}) of size at
most 8 by computer. The results are given in Table~\ref{tab:components}.
Using the partial sum $D_8(p)$ that takes into acount all the
subgraphs of size at most 8, we get an upper bound on $p_c(G(5))$ which is approximately $0.299973$:
$$
p_c(G(5)) \leq 0.299973.
$$
To the best of our knowledge, the previous best upper bound was close to $0.38$ \cite{DZ13}.
Gu and Ziff proposed a Monte-Carlo estimation of this threshold of $0.265$ \cite{GZ11} which is coherent 
with our upper bound.

\begin{table}[htbp]
\centering
\footnotesize
\caption{Enumeration of the rooted subgraphs of $G(5)$ up to size 8.}
\label{tab:components}
\begin{tabular}{c c c c}
\hline
$|E(C)|$ & $|V(C)|$ & $\partial(C)$ & occurrence\\
\hline
0 & 1 & 5 & 1\\
1 & 2 & 8 & 5\\
2 & 3 & 11 & 30\\
3 & 4 & 14 & 200\\
4 & 5 & 17 & 1400\\
4 & 5 & 16 & 25\\
5 & 6 & 20 & 10146\\
5 & 6 & 19 & 450\\
5 & 5 & 15 & 5\\
6 & 7 & 23 & 75460\\
6 & 7 & 22 & 5775\\
6 & 6 & 18 & 90\\
7 & 8 & 26 & 572720\\
7 & 8 & 25 & 64200\\
7 & 8 & 24 & 480\\
7 & 7 & 21 & 1155\\
8 & 9 & 29 & 4418190\\
8 & 9 & 28 & 661950\\
8 & 9 & 27 & 13005\\
8 & 8 & 24 & 12840\\
8 & 8 & 23 & 180\\
\hline
\end{tabular}
\end{table}

\section{Concluding comments}\label{section:conclusion}

Summarising Theorems \ref{theo:Dequation} and \ref{theo:D_graphical}
we have proved~:

\begin{theo}\label{theo:final}
  For $m\geq 5$ we have $p_c(G(m))\leq p_h$ with 
  \begin{align*}
    p_h  &= \sup \{ p \in [0, 1/2] \ | \ D(p)+p-\frac 2m = 0 \} \ \text{and}\\
    D(p) &= \frac 2 m \sum_{ C \in \mc C(v) } 
\left( \frac 1 {|V(C)|} \left(p^{|E(C)|} (1-p)^{\partial(C)} - (1-p)^{|E(C)|} p^{\partial(C)} \right) \right)
  \end{align*}
where $\mc C(v)$ denotes the set of connected subgraphs $C$ of $G(m)$
containing a fixed vertex $v$ of the graph $G(m)$.
\end{theo}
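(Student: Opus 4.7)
The statement is essentially a consolidation of the two main technical results established earlier in the paper, so the proof plan is to simply glue those results together and justify the restriction of the supremum to the interval $[0,1/2]$.

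First, I invoke Corollary \ref{cor:phom}, which was itself a direct consequence of Theorem \ref{theo:Dequation}: if $p < p_c(G(m))$, then the rank difference function satisfies $p - 2/m + D(p) = 0$, and so setting $p_h = \sup\{p : p - 2/m + D(p) = 0\}$ yields $p_c(G(m)) \leq p_h$. This gives the inequality part of the statement immediately and makes no reference to the explicit combinatorial description of $D(p)$.

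Next, I would substitute into this characterization of $p_h$ the explicit formula for $D(p)$ furnished by Theorem \ref{theo:D_graphical}, namely
\[
D(p) = \frac{2}{m}\sum_{C \in \mc C(v)} \frac{1}{|V(C)|}\Bigl(p^{|E(C)|}(1-p)^{|\partial(C)|} - (1-p)^{|E(C)|} p^{|\partial(C)|}\Bigr),
\]
where $\mc C(v)$ ranges over connected subgraphs of $G(m)$ containing a fixed vertex $v$. This is exactly the formula stated in Theorem \ref{theo:final}, so the only remaining issue is the interval on which the supremum is taken.

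Finally, I would justify replacing the full interval $[0,1]$ by $[0,1/2]$ in the supremum defining $p_h$. The formula of Theorem \ref{theo:D_graphical} was established for $0 < p \leq 1/2$, which is the regime where Lemma \ref{lemma:series_positivity} guarantees positivity of the series terms (and hence convergence of the limsup to an honest limit). This restriction is harmless for the bound on $p_c$ because the trivial specialization $D_0(p) \geq 0$ already gives $p_c(G(m)) \leq 2/m \leq 2/5 < 1/2$ for every $m \geq 5$, so any value of $p$ witnessing $p_c \leq p_h$ lies in $[0,1/2]$ anyway. Combining the inequality $p_c(G(m)) \leq p_h$ from Corollary \ref{cor:phom} with the closed-form expression for $D(p)$ and this restriction on the supremum yields exactly the claim.

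The only delicate point is the last one, namely making sure that restricting the supremum to $[0,1/2]$ does not lose any values of $p$ relevant to bounding $p_c$. There is no substantial new mathematical content beyond what has already been proved, so I do not expect a genuine obstacle, only careful bookkeeping.
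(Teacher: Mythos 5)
Your proposal is correct and matches the paper exactly: the paper gives no separate proof of Theorem~\ref{theo:final}, presenting it as a direct summary of Theorem~\ref{theo:Dequation} (via Corollary~\ref{cor:phom}) and Theorem~\ref{theo:D_graphical}, which is precisely how you argue. Your extra care about restricting the supremum to $[0,1/2]$ (justified since $D(p)>0$ forces any $p<p_c$ satisfying the equation to lie below $2/m<1/2$) is a minor bookkeeping point the paper leaves implicit, and you handle it correctly.
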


The value $p_h$ can be thought of as a critical value for the
appearance of homology in the graph $G(m)$. It captures the following
threshold~: for $p>p_h$, open subgraphs of large finite versions of
$G(m)$ must have a first homology group of dimension that scales linearly with the
total number of edges of the finite graph. For $p<p_h$, the dimension
of the homology group is sublinear instead. This bound is really
meaningful only for the hyperbolic case $m\geq 5$ since for $m=4$ (the square lattice), the
dimension of the total homology group of finite versions of the
infinite grid (tori) is limited to $2$.

A consequence of Theorem~\ref{theo:final} is that $p_h$ gives an upper
bound on the parameters of the quantum erasure channel that hyperbolic
surface codes built on the family $G_t(m)$ can sustain~\cite{DZ13}.

We conjecture~:

\begin{conj} \label{conj:p_c=p_h}
For $m\geq 5$, $p_c=p_h$.
\end{conj}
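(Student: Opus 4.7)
The plan is to prove the missing inequality $p_h \le p_c$; combined with Corollary~\ref{cor:phom} this yields the conjectured equality. Equivalently, I must show that
\[
F(p) := p - \tfrac{2}{m} + D(p) = \limsup_t \Esp_p\!\left(\frac{\dim H_1(G_{t,\e})}{|E_t|}\right)
\]
is strictly positive on $(p_c,1]$, since then no $p>p_c$ can satisfy $F(p)=0$ and the supremum defining $p_h$ is at most $p_c$.

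As a preparatory step I would promote the limsup in the definition of $F$ to a genuine limit. The quantity $\dim H_1(G_{t,\e})/|E_t|$ is a bounded Lipschitz function of independent edge variables and $G_t$ is vertex-transitive, so a McDiarmid-style concentration inequality should give almost-sure convergence to a deterministic $F(p)$; in particular $F(p)\ge 0$.

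A large range of $p$ is then handled by self-duality. Because $G_t$ is self-dual and $\bar\e$ has parameter $1-p$, one obtains $D(p)=-D(1-p)$ and hence the symmetry
\[
F(p)+F(1-p) = 1 - \tfrac{4}{m}.
\]
For $m\ge 5$ the right-hand side is strictly positive. Combined with Theorem~\ref{theo:Dequation}, which gives $F(p)=0$ whenever $p<p_c$, this already yields $F(q)=1-4/m>0$ for every $q>1-p_c$. The conjecture therefore holds in the high-density regime, and the same symmetry forces $p_c\le 1/2$, so that the remaining interval $(p_c,1-p_c]$ is non-empty but must be treated directly.

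The real obstacle is that intermediate regime $p\in(p_c,1-p_c]$, where neither $p$ nor $1-p$ is subcritical and duality gives no leverage. Here I would aim to exhibit a linearly large subspace of non-trivial homology classes inside $G_{t,\e}$. The natural strategy has three stages. First, isolate the edges lying in \emph{large} open clusters of $G_{t,\e}$, say those of size exceeding the local radius $t$: by $p>p_c$ and the local isomorphism of Theorem~\ref{theo:siran}, the density of such edges converges to the percolation probability $\theta(p)>0$ of $G(m)$. Second, argue that each such cluster must carry non-contractible cycles in $G_t$; informally, it lifts in the universal cover $G(m)$ to a genuinely infinite cluster, whose excursions across fundamental domains of the deck action identifying $G(m)$ with $G_t$ project to homologically non-trivial loops. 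Third, use the non-amenable isoperimetric inequality of \cite{HJL02}, already invoked in Lemma~\ref{lemma:series_positivity}, combined with an Euler-characteristic count, to convert a positive density of edges in large clusters into a positive density of linearly independent homology classes.

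The hard part, and the reason the statement is only a conjecture, is the second stage: quantifying how much of the topology of $G_t$ is actually seen by the large open clusters of $G_{t,\e}$. A dual reformulation via Lemma~\ref{lemma:dimH} reduces this to showing that $\Esp_p(\kappa^*_{t,\bar\e})/|E_t|$ is asymptotically strictly smaller than the naive value predicted by cluster sizes alone, i.e.\ that non-amenability forces genuine merging of closed dual clusters whenever $1-p$ is itself supercritical. I would expect the right technical tool to be a coupling between the finite configuration on $G_t$ and the infinite configuration on $G(m)$ together with an ergodic or large-deviation estimate on $G(m)$, in the locality-of-$p_c$ spirit of \cite{Be11} alluded to in the introduction.
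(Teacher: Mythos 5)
The statement you are asked to prove is stated in the paper as Conjecture~\ref{conj:p_c=p_h}; the paper itself offers no proof of it, only the single inequality $p_c\leq p_h$ (Corollary~\ref{cor:phom}), so there is no ``paper proof'' to compare against. Your proposal correctly identifies the missing direction $p_h\leq p_c$, and several of your preliminary reductions are sound: the identity $p-\frac 2m+D(p)=\limsup_t\Esp_p\bigl(\dim H_1(G_{t,\e})/|E_t|\bigr)$ does follow from Lemma~\ref{lemma:dimH}, nonnegativity is immediate, and the self-duality relation $F(p)+F(1-p)=1-\frac 4m$ (granting that the $\limsup$ defining $D$ is a genuine limit) disposes of the regime $p>1-p_c$ and forces $p_c\leq 1/2$. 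These observations are genuinely useful, but they leave the entire interval $(p_c,1-p_c]$ untouched, and since $p_c(G(5))<0.3$ this is most of the relevant range.

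The decisive step --- your ``second stage'' --- is a genuine gap, and you acknowledge as much yourself. The implication you need, namely that a positive density $\theta(p)$ of edges lying in large open clusters forces $\liminf_t\Esp_p\bigl(\dim H_1(G_{t,\e})\bigr)/|E_t|>0$, does not follow from anything in the paper: Lemma~\ref{lemma:smallclusters} says only that \emph{small} clusters carry no homology, and the converse is false --- a connected open subgraph with far more than $t$ edges can be a tree, or can consist entirely of cycles that are sums of faces, and then contributes nothing to $H_1$. So $\theta(p)>0$ by itself gives no lower bound on the homology; one must exhibit a number of \emph{linearly independent, homologically non-trivial} open cycles growing linearly in $|E_t|$, and neither the lifting heuristic to the universal cover nor the isoperimetric inequality of \cite{HJL02} supplies that count. (The Euler-characteristic bookkeeping you invoke controls $\dim\Ker\partial_1^{\e}$, i.e.\ the number of independent cycles, but not the quotient by $\im\partial_2^{\e}$, which is exactly where the difficulty lives.) Until that step is supplied, your argument establishes nothing beyond what the paper already proves, and the statement remains a conjecture.
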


Recall that in hyperbolic lattices it has been shown that immediately
beyond the critical probability, the open subgraph contains infinitely
many infinite connected components \cite{Be96}. The conjecture could be seen as a
``finite'' (but unbounded) version of this fact.

\section*{Acknowledgements}

Nicolas Delfosse was supported by the Lockheed Martin Corporation. The
authors wish to thank Robert Ziff for his comments and Russell Lyons
for pointing out an inaccuracy in 
Lemma~\ref{lemma:H_decomposition} in a preliminary version of this article.


\begin{thebibliography}{1}

\bibitem{BMK09}
S. K.~Baek, P.~Minnhagen, and B.~Jun Kim.
\newblock Percolation on hyperbolic lattices.
\newblock {\em Phys. Rev. E}, 79:011124, Jan 2009.

\bibitem{Be11}
I.~Benjamini, A.~Nachmias and Y.~Peres.
Is the critical percolation probability local ?
{\em Probability Theory and Related Fields},
149, pp. 261--269, 2011.

\bibitem{Be96}
I.~Benjamini and O.~Schramm.
\newblock Percolation beyond ${Z}^d$, many questions and a few answers.
\newblock {\em Electr. Commun. Probab}, 1:71--82, 1996.

\bibitem{Be01}
 I.~Benjamini, O.~Schramm.
\newblock Percolation in the hyperbolic plane.
\newblock {\em J. Am. Math. Soc.}, 29  pp. 487--507, 2001.

\bibitem{Berge73}
C.~Berge.
\newblock {\em Graphs and Hypergraphs}.
\newblock Elsevier, 1973.

\bibitem{DZ10}
N.~Delfosse and G.~Z\'emor.
\newblock Quantum erasure-correcting codes and percolation on regular tilings
  of the hyperbolic plane.
\newblock In {\em Proc. of IEEE Information Theory Workshop, ITW 2010}, pages
  1--5, 2010.

\bibitem{DZ13}
N.~Delfosse and G.~Z{\'e}mor.
\newblock Upper bounds on the rate of low density stabilizer codes for the
  quantum erasure channel.
\newblock {\em Quantum Information \& Computation}, 13(9-10):793--826, 2013.

\bibitem{Gi10}
P.~Giblin.
\newblock {\em Graphs, surfaces and homology}.
\newblock Cambridge University Press, 2010.

\bibitem{G89}
G.~Grimmett.
{\em Percolation.} Springer-Verlag, New York, 1989.

\bibitem{GZ11}
H.~Gu and R.~M. Ziff.
\newblock Crossing on hyperbolic lattices.
\newblock {\em Phys. Rev. E} 85:051141, May 2012.

\bibitem{HJL02}
O.~Haggstrom, J.~Jonasson, and R.~Lyons.
\newblock Explicit isoperimetric constants and phase transitions in the
  random-cluster model.
\newblock {\em Annals of Probability}, pages 443--473, 2002.

\bibitem{Ha02}
A.~Hatcher.
\newblock {\em Algebraic topology}.
\newblock Cambridge University Press, 2002.

\bibitem{K80}
H.~Kesten.
\newblock The critical probability of bond percolation on the square lattice
  equals 1/2.
\newblock {\em Communications in Mathematical Physics}, 74:41--59, 1980.

\bibitem{LB12}
J.F.~Lee and S.K.~Baek.
Bounds of percolation thresholds on hyperbolic lattices.
{\em Phys. Rev. E} 86, 062105 2012.

\bibitem{MW10}
N. Madras and C.C. Wu.
Trees, Animals, and Percolation on Hyperbolic Lattices
{\em Electronic J. of Probability}, Vol. 15, pp. 2019-2040, 2010.

\bibitem{Si00}
J.~\v{S}ir\'a\v{n}.
\newblock Triangle group representations and constructions of regular maps.
\newblock {\em Proc. of the London Mathematical Society}, 82(03):513--532,
  2000.

\end{thebibliography}

\newcommand{\SortNoop}[1]{}

\end{document}